\renewcommand{\epsilon}{\varepsilon}
\renewcommand{\emptyset}{\varnothing}
\numberwithin{equation}{section}
\newtheorem{theorem}[equation]{Theorem}
\newtheorem{proposition}[equation]{Proposition}
\newtheorem{corollary}[equation]{Corollary}
\newtheorem{lemma}[equation]{Lemma}
\theoremstyle{definition}
\newtheorem{definition}[equation]{Definition}
\newtheorem{question}[equation]{Question}
\theoremstyle{remark}
\newtheorem{remark}[equation]{Remark}
\newcommand{\cohom}[3]{H^{{\raise1pt\hbox{$\scriptstyle#1$}}}(#2\>\!,#3)}
\newcommand{\tatecohom}[3]%
  {\widehat H^{{\raise1pt\hbox{$\scriptstyle#1$}}}(#2\>\!,#3)}
\newcommand{\Cohom}[3]%
  {H^{{\raise1pt\hbox{$\scriptstyle#1$}}}\big(#2\>\!,#3\big)}
\newcommand{\Tatecohom}[3]%
  {\widehat H^{{\raise1pt\hbox{$\scriptstyle#1$}}}\big(#2\>\!,#3\big)}
\newcommand{\homol}[3]{H_{{\lower1pt\hbox{$\scriptstyle#1$}}}(#2\>\!,#3)}
\newcommand{\homolog}[2]{H_{{\lower1pt\hbox{$\scriptstyle#1$}}}(#2)}
\newcommand{\frakF}{\mathfrak{F}}
\newcommand{\frakG}{\mathfrak{G}}
\newcommand{\calO}{\mathcal O}
\newcommand{\ModOFG}{\mathop{{\operator@font
Mod\text{-}}\calO_{\frakF}G}}
\newcommand{\OFGMod}{\mathop{\calO_{\frakF}G\text{-}{\operator@font
Mod}}}
\newcommand{\ModOGG}{\mathop{{\operator@font
Mod\text{-}}\calO_{\frakG}G}}
\newcommand{\OGGMod}{\mathop{\calO_{\frakG}G\text{-}{\operator@font
Mod}}}
\newcommand{\Fall}{\frakF_{\operator@font all}}
\newcommand{\Ffin}{\frakF_{\operator@font fin}}
\newcommand{\Fvc}{\frakF_{\operator@font vc}}
\newcommand{\Fic}{\frakF_{\operator@font ic}}
\newcommand{\Ffg}{\frakF_{\operator@font fg}}
\newcommand{\Fpc}{\frakF_{\operator@font pc}}
\newcommand{\Fab}{\frakF_{\operator@font ab}}
\newcommand{\Fvpc}{\frakF_{\operator@font vpc}}
\newcommand{\Fvab}{\frakF_{\operator@font vab}}
\DeclareMathOperator{\diam}{diam}
\renewcommand{\coprod}%
{\mathop{\rotatebox[origin=c]{180}{$\displaystyle\prod$}}\limits}
\newcommand{\mA}{\mathcal A}
\newcommand{\marc}{\mathcal{MA}(\Gamma)}
\newcommand{\ma}{\mathcal{MA}}
\newcommand{\mas}{\mathcal{A}}
\numberwithin{equation}{section}
\begin{document}

\title[Matching arc complexes]{Matching arc complexes: connectedness and hyperbolicity}


\author{Javier Aramayona}
\thanks{The research presented here is part of R.d.P.'s Master Thesis at UAM-ICMAT, and of A.F.'s work within the JAE-Intro programme at CSIC}

\address{Instituto de Ciencias Matem\'aticas (ICMAT). Madrid, Spain}
\email{javier.aramayona@icmat.es}

\author{Rodrigo de Pool}
\address{Universidad Aut\'onoma de Madrid \& ICMAT. Madrid, Spain}
\thanks{J.A. was supported by grant PGC2018-101179-B-I00. J.A. and R.d.P. acknowledge financial support from the Spanish Ministry of Science and Innovation, through the ``Severo Ochoa Programme for Centres of Excellence in R\&D (CEX2019-000904-S)''. A.F. was supported by JAE-Intro scholarship JAEINT20\_EX\_1062}

\author{Alejandro Fern\'andez}
\address{Universidad Aut\'onoma de Madrid \& ICMAT. Madrid, Spain}

\subjclass[2010]{}

\date{}

\keywords{}

\begin{abstract}
Addressing a question of Zaremsky, we give conditions on a finite simplicial graph which guarantee that the associated matching arc complex is connected and hyperbolic. 
\end{abstract}

\maketitle

\section{Introduction}

Arc and curve complexes have proved to be a powerful tool in the study of  mapping class groups. Of central importance in this direction is a celebrated theorem of Masur--Minsky \cite{MM99} which asserts that the curve complex is hyperbolic; see also \cite{A13,B14,HPW15,PS13}. The analogous statement for the arc complex was established by Masur-Schleimer \cite{MS13}; see also \cite{HH17,HPW15}.

In this note, we concentrate on a variation of the arc complex called the {\em matching arc complex}. Matching arc complexes, especially those associated to planar surfaces, have recently played a central role in the study of finiteness properties of {\em braided Thompson groups} \cite{Bux} and their relatives \cite{GLU20}. 

We now briefly introduce this complex, see Section \ref{sec:prelim} for details. Given a surface $S$ with marked points and a finite simplicial graph $\Gamma$, we fix an embedding $\Gamma \hookrightarrow S$ such that the image of every vertex of $\Gamma$ is a marked point of $S$. Say that an arc on $S$ is {\em compatible} with $\Gamma$ if there is an edge of $\Gamma$ with the same endpoints. The matching arc complex $\ma(S,\Gamma)$ is the simplicial complex whose $k$-simplices are sets of $k+1$ (isotopy classes of) compatible arcs on $S$ which are pairwise disjoint, including at their endpoints.

 Zaremsky \cite{Z,Z2} asked the question of determining for which graphs $\Gamma$ is the associated matching arc complex $\ma(S,\Gamma)$ connected and hyperbolic; the aim of this short note is to deal with this problem. 
 
 Our first result characterizes those graphs $\Gamma$ for which $\ma(S,\Gamma)$ is connected. Let $\Gamma_0$ be the simplicial graph obtained from $\Gamma$ by deleting every isolated vertex of $\Gamma$; abusing notation, we will write $|\Gamma_0|$ for the number of vertices of $\Gamma_0$. We will prove: 

\begin{theorem}
Let $\Gamma$ be a finite simplicial graph with $|\Gamma_0|\ge 5$. Then $\ma(S,  \Gamma)$ is connected if and only if the complement of every edge of $\Gamma$ contains at least one edge. 
\label{main:conn}
\end{theorem}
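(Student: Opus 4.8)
The plan is to split the statement into a purely combinatorial assertion about $\Gamma$ and a geometric assertion about arcs, tied together by the map $\pi\colon \ma(S,\Gamma)\to M(\Gamma)$ sending each compatible arc to the edge of $\Gamma$ carrying its endpoints, where $M(\Gamma)$ is the \emph{matching complex} of $\Gamma$ (its simplices being the sets of pairwise vertex-disjoint edges). Since two compatible arcs can be disjoint \emph{including at their endpoints} only when their underlying edges are vertex-disjoint, $\pi$ is simplicial and, crucially, any two arcs carried by the \emph{same} edge of $\Gamma$ are non-adjacent; thus every edge of a path in $\ma(S,\Gamma)$ joins arcs carried by distinct, vertex-disjoint edges of $\Gamma$. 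For necessity I would argue contrapositively: if an edge $e=\{u,v\}$ has no vertex-disjoint partner, then any arc carried by $e$ shares an endpoint with every other compatible arc, so it is an isolated vertex of $\ma(S,\Gamma)$; as $|\Gamma_0|\ge 5$ forces $\Gamma$ to contain at least one further edge, and hence $\ma(S,\Gamma)$ to have at least two vertices, the complex is disconnected.

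For sufficiency, assume every edge of $\Gamma$ has a vertex-disjoint partner. The first step is the combinatorial \textbf{Lemma A}: the $1$-skeleton of $M(\Gamma)$ (the graph on the edges of $\Gamma$ with adjacency ``vertex-disjoint'') is connected. I would use that the complement of a finite graph is disconnected precisely when the graph is a join of two nonempty parts; so if $M(\Gamma)^{(1)}$ were disconnected, the edge set of $\Gamma$ would split as $E_1\sqcup E_2$ with every edge of $E_1$ meeting every edge of $E_2$. The hypothesis forces a vertex-disjoint pair inside each $E_i$, and a short case analysis then shows that all edges of $\Gamma$ are supported on the four vertices spanned by such a pair, giving $|\Gamma_0|\le 4$, a contradiction. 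This is exactly where the bound $|\Gamma_0|\ge 5$ is used, the $4$-cycle being the extremal obstruction.

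The second step is the geometric \textbf{Lemma B}: for vertex-disjoint edges $e,f$ of $\Gamma$, every arc carried by $e$ lies in the same component of $\ma(S,\Gamma)$ as every arc carried by $f$. Granting it, the theorem follows: fixing a partner $f$ of $e$ shows all arcs over $e$ lie in a single component, and Lemma A lets us chain these edge-classes along a path in $M(\Gamma)^{(1)}$ to conclude that $\ma(S,\Gamma)$ is connected. I would attack Lemma B by induction on the geometric intersection number $i(\alpha,\beta)$ of arcs $\alpha$ over $e$ and $\beta$ over $f$ in minimal position: the base case $i(\alpha,\beta)=0$ yields adjacency directly, while in the inductive step I would surger $\beta$ along $\alpha$ to lower the intersection number and realise the surgery as a short path in $\ma(S,\Gamma)$ detouring through an arc carried by a third edge disjoint from $f$.

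\textbf{Main obstacle.} The heart of the matter is Lemma B. Because arcs over a common edge are never adjacent, intersection-reduction cannot take place inside one edge-class: every elementary move must re-enter a different edge-class, and the only admissible arcs are those whose endpoints span an actual edge of $\Gamma$. The genuine difficulties are therefore (i) arranging each surgery to land on an arc whose endpoints again form an edge of $\Gamma$; (ii) ensuring the intermediate arcs stay essential and in minimal position, so that the induction really decreases complexity; and (iii) handling separating or essentially-crossing arcs, for which no disjoint partner over a prescribed edge need exist, so that one is forced to reroute through other edges. It is precisely here that the two hypotheses do their work: $|\Gamma_0|\ge 5$ supplies the extra marked points giving room to maneuver, and ``every edge has a disjoint partner'' provides, via Lemma A, the connected combinatorial backbone $M(\Gamma)^{(1)}$ along which the rerouting can be organised.
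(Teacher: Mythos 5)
Your necessity argument and your Lemma A are correct; Lemma A is in fact a clean way of isolating exactly where the hypothesis $|\Gamma_0|\ge 5$ enters (the $4$-cycle extremal case you identify is right). The genuine gap is the inductive step of your Lemma B, which is the entire geometric content of the theorem and which your proposal names rather than proves. The obstruction is concrete: every natural surgery of $\beta$ along $\alpha$ (in particular every unicorn arc obtained from them) has one endpoint on $e$ and one endpoint on $f$, and such a mixed pair of marked points need not span an edge of $\Gamma$; so the surgered arc is in general not even a vertex of $\ma(S,\Gamma)$, let alone one joined to $\beta$ by a controlled path. Your phrase ``realise the surgery as a short path in $\ma(S,\Gamma)$ detouring through an arc carried by a third edge disjoint from $f$'' restates the difficulty instead of resolving it: to reach the surgered arc from $\beta$ one would need to control the intersections of the detour arcs with everything in sight, and no such control is provided. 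Variants that keep both endpoints on $f$ (for instance, rerouting $\beta$ parallel to $\alpha$ and around an endpoint of $\alpha$) do not decrease $i(\,\cdot\,,\alpha)$, so the induction does not close.

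It is worth seeing how the paper sidesteps this. It never reduces intersection number between arcs over different edges. Given arbitrary $\alpha,\beta\in\ma(S,\Gamma)$, it first chooses an auxiliary arc $\beta'$ disjoint from $\beta$ and with the \emph{same endpoints as} $\alpha$ (possible because an arc with two distinct endpoints does not separate $S$), and then runs the unicorn path from $\alpha$ to $\beta'$: since both ends share the same endpoint pair, \emph{every} unicorn arc along the way has that endpoint pair and hence is compatible with $\Gamma$. The price is that consecutive unicorn arcs, though disjoint, share both endpoints and so are \emph{not} adjacent in $\ma(S,\Gamma)$; the real technical work of the paper consists of two lemmas showing that disjoint compatible arcs sharing exactly one endpoint (resp.\ both endpoints) are at distance at most $4$ (resp.\ $6$), proved by routing through arcs over edges of $\Gamma$ disjoint from the given ones --- this is where the edge-complement hypothesis and $|\Gamma_0|\ge 5$ do their geometric work, including the separating configurations you flag in item (iii). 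Your proposal contains no analogue of these bounded-distance lemmas, and without them neither Lemma B nor any surgery-based variant can be completed; with them, your combinatorial scaffolding (Lemma A and the map to the matching complex) becomes unnecessary, since the paper's direct argument already goes through.
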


We remark that, in the special case when $\Gamma$ is a {\em complete} or {\em linear} graph on $n\ge 5$ vertices,  Theorem \ref{main:conn} follows from Theorem 3.8 and Corollary 3.11 of \cite{Bux}. 

\medskip

Next, we give a condition on $\Gamma$ that ensures the hyperbolicity of the associated matching arc complex:

\begin{theorem}
Let $\Gamma$ be a finite simplicial graph for which $\ma(S,\Gamma)$ is connected. If $\Gamma_0$ is connected and contains a triangle, then $\ma(S,\Gamma)$ is hyperbolic.
\label{main:hyp}
\end{theorem}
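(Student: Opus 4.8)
The plan is to prove hyperbolicity by verifying the \emph{guessing geodesics} criterion in the form used by Hensel--Przytycki--Webb \cite{HPW15}: one must produce a constant $M\ge 0$ and, for each pair of vertices $a,b$ of $\ma(S,\Gamma)$, a connected subgraph $\eta(a,b)$ containing $a$ and $b$, so that (i) whenever $a,b$ are joined by an edge one has $\diam\eta(a,b)\le M$, and (ii) for all vertices $a,b,c$ the set $\eta(a,b)$ lies in the $M$--neighbourhood of $\eta(a,c)\cup\eta(c,b)$. Once such a family is exhibited with uniform $M$, hyperbolicity with a uniform constant follows. Since isolated vertices of $\Gamma$ carry no compatible arcs, I would first replace $\Gamma$ by $\Gamma_0$ and assume throughout that $\Gamma=\Gamma_0$ is connected and contains a triangle $t_1t_2t_3$. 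For the paths $\eta$ I would use \emph{unicorn} arcs as in \cite{HPW15}: given two compatible arcs $a,b$ in minimal position, one orients them and, for each $\pi\in a\cap b$, forms the arc running along $a$ to $\pi$ and then along $b$ to an endpoint of $b$; ordering these surgered arcs by the position of $\pi$ gives a path from $a$ to $b$ with consecutive arcs disjoint.

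Two features distinguish the matching setting from the classical arc complex, and both are exactly where the hypotheses on $\Gamma$ must be used. First, a unicorn arc inherits one endpoint from $a$ and one from $b$, so \emph{a priori} its two endpoints need not span an edge of $\Gamma$, and the arc need not be compatible. Second, adjacency in $\ma(S,\Gamma)$ requires arcs to be disjoint \emph{including at their endpoints}, which is strictly stronger than in $\mathcal{A}(S)$. The triangle $t_1t_2t_3$ addresses both obstructions simultaneously: its three pairwise--adjacent marked points realise every endpoint pattern occurring among triangle--based arcs by an actual edge of $\Gamma$, and they supply enough distinct marked points that endpoint--disjoint compatible arcs are plentiful; meanwhile connectedness of $\Gamma_0$ allows one to steer the endpoints of a general compatible arc towards $\{t_1,t_2,t_3\}$ through a bounded number of disjointness moves. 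Concretely, I would show that every vertex of $\ma(S,\Gamma)$ lies within uniformly bounded distance of the subcomplex spanned by arcs with both endpoints in the triangle, and then run the unicorn construction inside that subcomplex, where compatibility of every intermediate arc is automatic.

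With the paths so defined, condition (i) is routine: an adjacent pair is carried into the triangle subcomplex by paths of bounded length and then joined by a short unicorn path. The substance is condition (ii), the thin--triangle estimate, for which I would transcribe the surgery lemmas of \cite{HPW15}: cutting a unicorn arc built from $a,b$ along an arc of $c$ exhibits it as disjoint from some arc appearing on $\eta(a,c)$ or on $\eta(c,b)$, hence at distance $\le 1$ from that path, giving the required uniform neighbourhood bound. The main obstacle I anticipate is \emph{not} the coarse geometry of this estimate, which is formally parallel to the classical case treated in \cite{HPW15,MS13} and, for complete graphs, in \cite{Bux}; rather, it is the bookkeeping needed to guarantee that every arc produced by these surgeries is again $\Gamma$--compatible \emph{and} endpoint--disjoint from its neighbours, and that the bounded reduction to triangle--based arcs is compatible with the thin--triangle inequality. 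This is precisely the step where connectedness of $\Gamma_0$ and the presence of the triangle are indispensable, and it is where I expect the technical work to concentrate. Granting it, the guessing geodesics criterion yields a uniform hyperbolicity constant for $\ma(S,\Gamma)$, completing the proof.
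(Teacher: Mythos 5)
Your overall strategy coincides with the paper's: verify the guessing-geodesics criterion (Lemma \ref{lem:guessing}) using unicorn paths between arcs whose endpoints lie on a fixed triangle of $\Gamma_0$, after relaxing complete disjointness (the paper does this via the auxiliary complex $\mas(S,\Gamma)$ and the quasi-isometry of Lemma \ref{lem:qi}). Your thin-triangle estimate is the paper's Lemma \ref{lem:thin}, and it does go through with a uniform constant exactly as you describe, because unicorn arcs built from two triangle-based arcs automatically have endpoints spanning an edge of the triangle. Likewise, your preliminary reduction is sound: every compatible arc is disjoint from some triangle-based arc by change of coordinates, so the ``projection'' to the triangle subcomplex moves points a bounded distance.

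The genuine gap is your treatment of condition (i), which you declare ``routine'': ``an adjacent pair is carried into the triangle subcomplex by paths of bounded length and then joined by a short unicorn path.'' This is in fact the heart of the matter, and the unicorn path in question is neither short nor obviously of bounded diameter. If $a,b$ are adjacent and $a',b'$ are their triangle-based projections, then $a'$ and $b'$ may intersect arbitrarily many times (their choice cannot depend on the pair, or condition (ii) breaks), so one must bound the \emph{diameter} of $\mathcal{U}(a'^{v},b'^{u})$ in the matching complex by hand. The only available tool is Lemma \ref{lem:unicorn_triangle}, i.e.\ comparison with a third reference arc; but the natural reference arcs $a$ and $b$ are not triangle-based, so the unicorn arcs they generate have one endpoint at an arbitrary vertex of $\Gamma$ and need not be compatible --- precisely the obstruction your construction was designed to avoid, resurfacing at the one step where it cannot be sidestepped inside the triangle subcomplex. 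The paper's resolution (Lemma \ref{lem:diam}, in particular the Claim and its induction) interpolates along a simple path in $\Gamma_0$ from the triangle to the endpoints of $a$, building a compatible reference arc for each edge of that path; this is where connectedness of $\Gamma_0$ is genuinely used, and it is what forces the bound $2\diam(\Gamma_0)+9$ instead of a uniform one. Your proposal places the use of connectedness elsewhere (bookkeeping for condition (ii)) and, relatedly, asserts a \emph{uniform} hyperbolicity constant at the end; no argument you give supports uniformity, and the paper explicitly leaves uniform hyperbolicity as an open question precisely because this step makes the constant depend on $\diam(\Gamma_0)$.
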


We stress that if $\Gamma$ is the complete graph on at least five vertices, then Theorem \ref{main:hyp} follows from the combination of \cite[Theorem 5.2]{DFV18} and Lemma \ref{lem:qi} below.

\begin{remark}
The inclusion  of the matching arc complex into the arc complex is never a quasi-isometry, and hence Theorem \ref{main:hyp} does not follow (or not immediately, at least) from the fact that arc complexes are hyperbolic. 
\end{remark}

\begin{remark}
Not all finite simplicial graphs yield a hyperbolic matching arc complex. Indeed, the {\em Disjoint Holes Principle} of Masur--Schleimer \cite[Lemma 5.12]{MS13} implies that if  $\Gamma$ is a bipartite graph whose parts have at least four vertices each, then $\ma(S,\Gamma)$ contains a quasi-isometrically embedded copy of $\mathbb Z^2$, and therefore cannot be hyperbolic. In particular, matching arc complexes associated to trees or even-sided polygons  are never hyperbolic. As a special case,  matching arc complexes associated to linear graphs (one of the family of matching arc complexes studied in \cite{Bux}) are never hyperbolic. Of course, there are connected graphs that are triangle-free but not bipartite, and it remains an interesting problem to determine if the associated matching arc complexes are hyperbolic or not. 
\end{remark}

Our proof of Theorem \ref{main:hyp} adapts the {\em unicorn path} machinery of Hensel-Przytycki-Webb \cite{HPW15}, which they used to prove the {\em uniform hyperbolicity} of arc graphs (the 1-skeleta of  arc complexes). The difficulties we encounter boil down to the fact that a unicorn path between two vertices of $\ma(S,\Gamma)$ may contain edges that do not belong to $\ma(S,\Gamma)$; in order to overcome this problem,  we will use the structure of $\Gamma$, plus techniques similar in spirit to those of \cite[Section 5]{DFV18}. 

One final remark is that, in stark contrast to the situation in \cite{HPW15}, our arguments produce a hyperbolicity constant (for matching arc graphs) that depends on the diameter of $\Gamma$. In light of this, we ask: 

\begin{question}
Is the family of hyperbolic matching arc graphs {\em uniformly} hyperbolic? 
\end{question}

\noindent{\bf Acknowledgements.} We thank Matt Zaremsky for conversations, and for comments on an earlier draft.

\section{Preliminaries}
\label{sec:prelim}

Throughout this note, $S$ will be a connected orientable surface with empty boundary and negative Euler characteristic. Let $g$ and $n$ be, respectively, the genus and number of punctures of $S$; as usual, we will treat punctures either as topological ends or as marked points on $S$. 

\subsection{Arcs} An {\em arc} is the image of a continuous  map $\alpha:[0,1] \to S$ such that its restriction to $(0,1)$ is injective, $\alpha(0)$ and $\alpha(1)$ are marked points of $S$, and $\alpha((0,1))$ is disjoint from the set of marked points of $S$. We say that an arc is {\em essential} if it does not bound a disk on $S$. In order to keep notation light, we will refer to the isotopy class  (relative to endpoints) of an essential arc simply as an {\em arc}, and will blur the difference between arcs and their representatives. 

Given arcs $\alpha$ and $\beta$, their {\em geometric intersection number} $i(\alpha, \beta)$ is defined as the minimum number of intersection points (not counting endpoints) between representatives of $\alpha$ and $\beta$. We say that representatives of $\alpha$ and $\beta$ are in {\em minimal position} if they realize the intersection number between $\alpha$ and $\beta$. Note that, given any set of arcs, it is possible to find representatives in minimal position by endowing $S$ with a hyperbolic metric and choosing geodesic representatives.

We will say that two arcs $\alpha$ and $\beta$ are {\em disjoint} if $i(\alpha, \beta)= 0$. If $\alpha$ and $\beta$ are not disjoint, we  say they {\em intersect}. Finally, if $i(\alpha,\beta)=0$ and $\alpha$ and $\beta$ have no endpoints in common,  we say they are {\em completely disjoint}. 

\begin{definition}[Arc complex]
The {\em arc complex} $\mA(S)$ is the simplicial complex whose $k$-simplices are sets of $k+1$ (pairwise) distinct  and  disjoint arcs.
\end{definition}

\subsubsection{Unicorn paths} We now briefly recall the main features of the {\em unicorn arc} machinery of \cite{HPW15}. Let $\alpha$ and $\beta$ be arcs in minimal position, and choose endpoints $v$ and $u$ of $\alpha$ and $\beta$, respectively. Consider  subarcs $\alpha'\subset \alpha$ and $\beta'\subset \beta$ which have one  endpoint at $v$ and $u$, respectively, and the other a common endpoint $p\in \alpha\cap \beta$.  If $\alpha' \cup \beta'$ has no self-intersections, then it is called a \emph{unicorn arc} obtained from $\alpha^v$ and $\beta ^u$. Observe that $\alpha' \cup \beta'$ is automatically essential, since $\alpha$  and $\beta$ are in minimal position. Also, note that the isotopy class of an unicorn arc is completely determined by the point $p$; however not every such point defines an unicorn arc, as for some choices the subarcs $\alpha'$ and $\beta'$ may intersect each other. We define a total order for unicorns arcs obtained from $\alpha^v$ and $\beta ^u$, namely: 
\[
\alpha'\cup \beta' \leq \alpha''\cup \beta'' \iff \alpha'' \subset \alpha '.
\]
Let 
\[\mathcal{U}(\alpha^v,\beta^u):= \left (c_0=\alpha, c_1,\dots,c_{n-1},c_n=\beta\right ),\] be the ordered tuple of unicorn arcs defined by $\alpha^v$ and $\beta ^u$;  we call this tuple a \emph{unicorn path} between $\alpha$ and $\beta$. The following is Remark 3.4 of \cite{HPW15}: 
\begin{lemma}
Any two consecutive elements of  $\mathcal{U}(\alpha^v,\beta^u)$ are disjoint. 
\end{lemma}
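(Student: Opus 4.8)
The plan is to fix geodesic (hence minimal-position) representatives of $\alpha$ and $\beta$, so that $\alpha \cap \beta$ is finite, and to analyze two consecutive unicorns directly from their description as subsets of $\alpha \cup \beta$. Write $c = c_{i-1}$ and $c' = c_i$, with common meeting points $p$ and $q$ respectively. By definition of the order we have $c \le c'$, so the $\alpha$-part of $c'$ is contained in that of $c$; that is, $q$ lies on $\alpha$ strictly between $v$ and $p$. (We may assume both are genuine unicorns; the extreme pairs involving $c_0 = \alpha$ or $c_n = \beta$ are handled by the same reasoning, reading $p$, resp. $q$, as the relevant endpoint.)

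First I would record a nesting property on the $\beta$-side. Since $c$ is a unicorn arc it is embedded, so its two pieces $\alpha[v,p]$ and $\beta[u,p]$ meet only at $p$. As $q$ is a point of $\alpha \cap \beta$ lying on $\alpha[v,p]$ with $q \neq p$, it cannot lie on $\beta[u,p]$; hence $p$ separates $u$ from $q$ along $\beta$, i.e. $\beta[u,p] \subset \beta[u,q]$. Consequently $c$ and $c'$ coincide along $\alpha[v,q]$ and along $\beta[u,p]$, and differ only in the two segments joining $q$ to $p$: the arc $c$ uses $\sigma := \alpha[q,p]$, while $c'$ uses $\tau := \beta[q,p]$.

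The heart of the matter, and the step I expect to be the main obstacle, is to show that $\sigma$ and $\tau$ meet only at their common endpoints $p$ and $q$. Here I would argue by contradiction using consecutiveness: if $\sigma$ and $\tau$ had an interior intersection, let $x$ be the one closest to $q$ along $\sigma$, so that the interior of $\alpha[q,x]$ is disjoint from $\tau$. One then checks that $\alpha[v,x] \cup \beta[u,x]$ is embedded, which reduces to verifying that the four relevant pairs of subarcs meet only where forced; this follows from the embeddedness of $c$ and $c'$ together with the minimality of the choice of $x$. Thus $x$ determines a genuine unicorn whose $\alpha$-part satisfies $\alpha[v,q] \subsetneq \alpha[v,x] \subsetneq \alpha[v,p]$, i.e. a unicorn strictly between $c'$ and $c$ in the order, contradicting consecutiveness. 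Getting all four intersection conditions to come out cleanly, and in particular pinning down the cyclic order of $u,p,x,q$ along $\beta$, is the delicate bookkeeping I anticipate being the crux.

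Once $\sigma \cap \tau = \{p,q\}$ is established, disjointness is immediate: every pairwise intersection among the pieces $\alpha[v,q]$, $\beta[u,p]$, $\sigma$, $\tau$ occurs only at $v$, $u$, $p$, or $q$. Pushing the two shared subarcs $\alpha[v,q]$ and $\beta[u,p]$ off to parallel copies and performing a small perturbation near the junctions $p$ and $q$ then produces representatives of $c$ and $c'$ with disjoint interiors, so that $i(c,c') = 0$, as required.
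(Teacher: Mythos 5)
The paper does not prove this lemma at all --- it is quoted verbatim as Remark 3.4 of \cite{HPW15} --- so the comparison must be against that source, and your argument is a correct reconstruction of it. The mechanism is the same one HPW use (a ``first intersection point'' of the two difference segments $\sigma=\alpha[q,p]$ and $\tau=\beta[p,q]$ determines a new unicorn, whose $\alpha$-part is strictly nested between those of $c_{i-1}$ and $c_i$), the only organizational difference being that HPW identify the successor of $c_{i-1}$ explicitly and read off disjointness, whereas you take consecutiveness as given and derive $\sigma\cap\tau=\{p,q\}$ by contradiction; the ``delicate bookkeeping'' you flag does come out cleanly, since $x\in\operatorname{int}(\tau)$ pins down the order $u,p,x,q$ along $\beta$ and the four pairwise checks follow from embeddedness of $c_{i-1}$, $c_i$ and minimality of $x$.
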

As a consequence, $\mathcal{U}(\alpha^v,\beta^u)$ is  really a path in $\mathcal A(S)$. The next lemma is \cite[Lemma 3.3.]{HPW15}:

\begin{lemma}\label{lem:unicorn_triangle}
Let $\alpha^v$, $\beta^u$ and $\gamma^w$ based at $v$, $u$ and $w$, respectively. For every $c\in \mathcal{U}(\alpha^v,\beta^u)$ there is $c^*\in  \mathcal{U}(\alpha^v,\gamma^w)\cup  \mathcal{U}(\gamma^w,\beta^u)$ with $i(c,c^*)=0$.
\end{lemma}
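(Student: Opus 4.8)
The statement is Lemma 3.3 of Hensel–Przytycki–Webb, so my plan is to reconstruct their combinatorial argument. The goal is to take an arbitrary unicorn arc $c \in \mathcal{U}(\alpha^v,\beta^u)$ and produce a unicorn arc $c^*$ in one of the two ``side'' unicorn paths that is disjoint from $c$. The natural strategy is to fix geodesic representatives of $\alpha$, $\beta$, $\gamma$ that are pairwise in minimal position simultaneously, and then work directly with the defining subarcs. Write $c = \alpha' \cup \beta'$, where $\alpha' \subset \alpha$ runs from $v$ to a switch point $p \in \alpha \cap \beta$ and $\beta' \subset \beta$ runs from $p$ to $u$.

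\textbf{Case analysis on where $\gamma$ meets $c$.} First I would consider the two extreme cases. If $\gamma$ is completely disjoint from $c$, then $c = \alpha$ or $c = \beta$ lies in both side-paths already, or more simply $c^\ast = \gamma$ works only when appropriate, so the content is in the generic case where $\gamma$ does meet $c$. The main idea is to walk along $c$ starting from the endpoint $v$ and locate the \emph{first} point $q$ at which $c$ meets $\gamma$. This point $q$ lies either on the $\alpha'$ portion or on the $\beta'$ portion of $c$. If $q$ lies on $\alpha'$, then the initial segment of $\alpha$ from $v$ to $q$ together with the segment of $\gamma$ from $q$ to $w$ should, after checking it has no self-intersections, be a unicorn arc in $\mathcal{U}(\alpha^v,\gamma^w)$; by construction it shares only the initial portion of $\alpha$ with $c$ and meets $\gamma$ only at $q$, so it is disjoint from $c$. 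Symmetrically, if $q$ lies on $\beta'$, I would build a unicorn arc in $\mathcal{U}(\gamma^w,\beta^u)$ from the segment of $\gamma$ from $w$ to $q$ and the segment of $\beta$ from $q$ to $u$.

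\textbf{The main obstacle.} The subtle point — and where the argument really lives — is verifying that the arc I build is genuinely a unicorn arc, i.e.\ that the concatenation has \emph{no self-intersections}, rather than merely a pair of subarcs sharing a single switch point. This requires using minimality of position and the fact that $q$ was chosen as the \emph{first} intersection along $c$: the initial segment of $\alpha$ (or $\beta$) up to $q$ is disjoint from $\gamma$ by the choice of $q$, and one must rule out the new $\gamma$-segment re-crossing the old $\alpha$- or $\beta$-segment. Here I expect to invoke minimal position (geodesics cannot form bigons) to argue that no spurious crossings occur, and possibly to pass to the innermost such configuration. I anticipate this disjointness verification to be the crux; the bookkeeping of which of the two side-paths receives $c^\ast$ is routine once the construction is pinned down.

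\textbf{Conclusion.} Assembling the cases, every $c \in \mathcal{U}(\alpha^v,\beta^u)$ yields a $c^\ast$ in $\mathcal{U}(\alpha^v,\gamma^w) \cup \mathcal{U}(\gamma^w,\beta^u)$ with $i(c,c^\ast)=0$, which is exactly the claim. I would take care to handle the degenerate endpoints $c = \alpha$ and $c = \beta$ separately, since for these one can simply take $c^\ast = \alpha$ or $c^\ast = \beta$ respectively, both of which are the initial or terminal vertices of the relevant side-paths.
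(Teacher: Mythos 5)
There is a genuine gap, and it sits exactly where you predicted the crux would be --- but your choice of surgery point is the wrong one, and no minimal-position argument can repair it. (For reference: the paper does not prove this lemma at all; it quotes it as \cite[Lemma 3.3]{HPW15}, so the comparison here is with Hensel--Przytycki--Webb's argument.) You take $q$ to be the first point of $c\cap\gamma$ encountered \emph{along $c$} starting from $v$. This choice guarantees that the segment of $c$ from $v$ to $q$ has interior disjoint from $\gamma$, hence that your candidate $c^*$ (that segment together with the subarc of $\gamma$ from $q$ to $w$) is embedded, i.e.\ is a legitimate unicorn arc. But it does nothing to control the rest of $c^*$: the subarc of $\gamma$ from $q$ to $w$ may cross $c$ transversally at the other points of $c\cap\gamma$ lying beyond $q$ along $c$. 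Concretely, if $\gamma$ crosses $\alpha'$ at $q$ and also crosses $\beta'$ at a point $r$ that lies between $q$ and $w$ along $\gamma$, then $r$ is a transverse intersection point of your $c^*$ with $c$, and in general it is essential --- there is no bigon to remove. So the conclusion $i(c,c^*)=0$ simply fails for your construction; the sentence ``by construction it \dots{} is disjoint from $c$'' conflates embeddedness of $c^*$ (which your choice of $q$ does give) with disjointness of $c^*$ from $c$ (which it does not).

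The correct choice --- and this is the whole content of the HPW proof --- is to traverse $\gamma$ starting from $w$ and let $q$ be the first point of $\gamma\cap c$ \emph{along $\gamma$}. If $\gamma\cap c=\emptyset$, take $c^*=\gamma$, which is a vertex of both side paths (your handling of this degenerate case is also off: there is no reason $c$ should equal $\alpha$ or $\beta$ just because $\gamma$ misses $c$). Otherwise let $\gamma'\subset\gamma$ be the subarc from $w$ to $q$, so that the interior of $\gamma'$ is disjoint from $c$. If $q\in\alpha'$, put $c^*=\alpha''\cup\gamma'$, where $\alpha''\subset\alpha$ is the subarc from $v$ to $q$; since $q\in\alpha'$ we have $\alpha''\subset\alpha'\subset c$. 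Then $c^*$ is embedded because $\gamma'$ meets $c\supset\alpha''$ only at $q$, so $c^*\in\mathcal{U}(\alpha^v,\gamma^w)$; and $c\cap c^*$ consists of the shared subarc $\alpha''$ together with the single touching point $q$, with no transverse crossings, so the two arcs can be isotoped off each other and $i(c,c^*)=0$. If instead $q\in\beta'$, the same surgery gives $c^*\in\mathcal{U}(\gamma^w,\beta^u)$. In short: ``first intersection along $\gamma$'' buys disjointness of $c^*$ from $c$ at the small cost of checking embeddedness via $\alpha''\subset c$, whereas your ``first intersection along $c$'' buys embeddedness but forfeits disjointness, which is the statement being proved.
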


\subsection{Matching arc complex}
Let $\Gamma$ be a finite simplicial graph. We fix an embedding $\Gamma \hookrightarrow S$ such that the image of every vertex of $\Gamma$ is a marked point of $S$; in particular, we are implicitly assuming that $S$ has at least as many punctures as $\Gamma$ has vertices. In what follows, we will blur the difference between $\Gamma$ and its image under this embedding. An arc $\alpha$ is {\em compatible} with $\Gamma$ if there is an edge $e$ of $\Gamma$ with the same endpoints; we say that $\alpha$ is compatible with  $e$.

\begin{definition}[Matching arc complex]
The {\em matching arc complex} $\ma(S,\Gamma)$ is the simplicial complex whose $k$-simplices are sets of $k+1$ arcs compatible with $\Gamma$ and which are pairwise completely disjoint. 
\end{definition}
Throughout, we will write $d(\cdot,\cdot)$ for the natural path-metric on the vertex set of $\ma(S,\Gamma)$.

\section{Connectivity of $\marc$}\label{sec:connection}
In this section we prove Theorem \ref{main:conn}. Before doing so, we need a couple of  technical lemmas that will be used to (uniformly) bound the distance between  consecutive vertices in a unicorn path.

\begin{lemma}\label{lem:cuna_disjunta}
Let $\Gamma$ as in Theorem \ref{main:conn}. If $\alpha$ and $\beta$ are disjoint arcs on $S$ with exactly one endpoint in common, then $d(\alpha, \beta)\leq 4$.
\end{lemma}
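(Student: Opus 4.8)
The plan is to exhibit an explicit path of length at most $4$ joining $\alpha$ and $\beta$ in $\ma(S,\Gamma)$. Write $p$ for the common endpoint and $a,b$ for the remaining endpoints of $\alpha,\beta$; since they share exactly one endpoint, $a,b,p$ are three distinct marked points, and $\alpha,\beta$ are compatible with edges $e_\alpha=\{a,p\}$ and $e_\beta=\{b,p\}$ of $\Gamma$. The first thing I would record is that any neighbour of $\alpha$ in $\ma(S,\Gamma)$ is realized by an arc whose endpoints avoid $\{a,p\}$ (completely disjoint arcs share no endpoints), so it is compatible with an edge contained in the complement of $e_\alpha$; symmetrically for $\beta$. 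Thus the hypothesis on $\Gamma$ — that the complement of every edge contains an edge — is exactly what guarantees that $\alpha$ and $\beta$ have neighbours at all, and it supplies an edge $f_1$ in the complement of $e_\alpha$ and an edge $f_2$ in the complement of $e_\beta$ to work with.

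The key topological tool I would use is that the union $\alpha\cup\beta$ is an embedded arc (a tree) through $a,p,b$, so a closed regular neighbourhood $N$ of it is a disk, and $S^{*}:=S\setminus\mathring N$ is connected, being a connected surface with an open disk removed. Consequently, any arc realized inside $S^{*}$ whose endpoints avoid $\{a,p,b\}$ is \emph{completely disjoint} from both $\alpha$ and $\beta$; and an arc realized in $S^{*}$ with a single endpoint at $a$ (resp. $b$) is completely disjoint from $\beta$ (resp. $\alpha$). This lets me convert the combinatorial data $f_1,f_2$ into genuine neighbours in $\ma(S,\Gamma)$: realize $f_1$ by an arc $\gamma_1$ in $S^{*}$ completely disjoint from $\alpha$, and $f_2$ by an arc $\gamma_2$ in $S^{*}$ completely disjoint from $\beta$.

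It then remains to join $\gamma_1$ and $\gamma_2$ inside $S^{*}$ by a path of length at most $2$, which yields $d(\alpha,\beta)\le 1+2+1=4$. Here I would split into cases according to whether $\Gamma$ contains an edge $g$ disjoint from all of $\{a,p,b\}$. If it does, I realize $g$ in $S^{*}$ as a \emph{common} neighbour of $\alpha$ and $\beta$, giving $d(\alpha,\beta)\le 2$. If it does not, then $f_1$ and $f_2$ are forced to be of the form $\{b,o\}$ and $\{a,o'\}$ with $o,o'$ among the non-isolated marked points outside $\{a,p,b\}$ (of which there are at least two, since $|\Gamma_0|\ge 5$); I would then route $\gamma_1,\gamma_2$ in $S^{*}$, using planarity and the abundance of punctures, so that they are completely disjoint, producing the length-$3$ path $\alpha,\gamma_1,\gamma_2,\beta$. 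In the degenerate subcase $o=o'$, where $\gamma_1,\gamma_2$ are forced to share the endpoint $o$, I insert one further arc through a fifth marked point — available precisely because $|\Gamma_0|\ge 5$ — to obtain the length-$4$ path.

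The main obstacle I anticipate is this last step: realizing several arcs \emph{simultaneously} completely disjoint is a genuine constraint (two arcs with prescribed endpoints can be forced to intersect, as on the four-times punctured sphere), so one cannot merely quote that disjoint edges of $\Gamma$ yield disjoint arcs. The real work lies in the bookkeeping of which marked points the available complement-edges are forced to occupy, and in using the connectedness of $S^{*}$ together with the count $|\Gamma_0|\ge 5$ to guarantee enough room to carry out the routing; this is also the source of the constant $4$ rather than $2$ or $3$.
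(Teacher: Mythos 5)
Your proposal is correct and follows essentially the same route as the paper's proof: the identical three-way case split (an edge of $\Gamma$ avoiding all three endpoints gives distance $2$; a disjoint pair of complement-edges gives the length-$3$ path; the forced-sharing case, where the only available edges are $\{b,o\}$ and $\{a,o\}$, is resolved by inserting a middle arc compatible with an edge $\{p,z\}$ through a fifth vertex, giving length $4$). The only difference is expository: you make explicit, via the regular-neighbourhood/connected-complement argument, the geometric realization of pairwise completely disjoint arcs that the paper simply asserts with ``we may choose''.
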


\begin{proof}
Let $e,f\in \Gamma$ be edges compatible with $\alpha$ and  $\beta$, respectively. If there exists an edge $e'' \in \Gamma$ such that $e''\cap (e\cup f) = \emptyset$, then we can choose an arc in $\ma(S,\Gamma)$ compatible with $e''$ and which is completely disjoint from $\alpha \cup \beta$. In particular, $d(\alpha, \beta)= 2$. 

Suppose now that no such edge exists. By hypothesis, there exists an edge $e'$ with $e \cap e'= \emptyset$; we may assume that $e'$ and $f$ have exactly one endpoint in common, for otherwise we are back in the situation above. Similarly, there exists another edge $f'$,  disjoint from $f$ and with exactly one endpoint in common with $e$. There are two cases to consider, depending on whether $e'$ and $f'$ are disjoint or not. 

Assume first that $e' \cap f' = \emptyset$. Then, we can choose completely disjoint arcs $\gamma_1, \gamma_2\in \ma(S,\Gamma)$, compatible with $e'$ and $f'$ respectively,  such that $i(\gamma_i,\alpha) = i(\gamma_i,\beta)=0$. Then the sequence $\alpha, \gamma_1,\gamma_2, \beta$ gives a path in $\ma(S,\Gamma)$ of length three between $\alpha$ and $\beta$. 

It remains to consider the case where $e' \cap f' \ne \emptyset$; as $\Gamma$ is simplicial, this means that $e'$ and $f'$ have exactly one endpoint in common. Since $|\Gamma_0| \ge 5$, there exists an edge $e''$ that shares at most one endpoint with $e\cup f\cup e' \cup f'$. Moreover, since we are not in any of the cases considered previously, we may assume that $e''$ shares exactly one endpoint with both  $e$ and $f$, but is completely disjoint from $e'$ and $f'$. Then we may choose pairwise disjoint arcs $\gamma_1,\gamma_2, \gamma_3$ such that $\gamma_1$ (resp. $\gamma_2,\gamma_3$) is compatible with $e'$ (resp. $e'', f'$), and $i(\gamma_i,\alpha)= i(\gamma_i,\beta)=0$. Thus, $\alpha, \gamma_1,\gamma_2,\gamma_3,\beta$ yield a path of length four in $\ma(S,\Gamma)$ between $\alpha$ and $\beta$.  
\end{proof}

Next, we deal with arcs sharing two endpoints:

\begin{lemma}\label{lem:mismos_endpoints}
Let $\Gamma$ as in Theorem \ref{main:conn}. If $\alpha, \beta \in \marc$ are disjoint arcs with the same endpoints, then $d(\alpha, \beta)\leq 6$. 
\end{lemma}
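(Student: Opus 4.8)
The natural strategy is to reduce the two-shared-endpoints case to the one-shared-endpoint case already handled by Lemma \ref{lem:cuna_disjunta}. Suppose $\alpha$ and $\beta$ are disjoint arcs compatible with $\Gamma$ sharing both endpoints; say they are both compatible with an edge $e$ of $\Gamma$ joining vertices $x$ and $y$. The plan is to find an intermediate arc $\delta \in \marc$ that shares \emph{exactly one} endpoint with each of $\alpha$ and $\beta$, is disjoint from both, and then apply Lemma \ref{lem:cuna_disjunta} twice: to the pair $(\alpha,\delta)$ and to the pair $(\delta,\beta)$. Each application gives distance at most $4$, but since $\delta$ is a single vertex counted once, we must be careful about how the bounds add. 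A cleaner route to the bound $6$ is to produce an arc $\delta$ that is \emph{completely disjoint} from both $\alpha$ and $\beta$ (distance $1$ from each would give $2$, which is too easy and not generally available); more realistically we produce $\delta$ at distance $\le 3$ from $\alpha$ and $\le 3$ from $\beta$, or split the budget as $4 + 2$ using one completely disjoint arc on one side.

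Concretely, the first step is to use the hypothesis of Theorem \ref{main:conn} (the complement of every edge contains an edge) together with $|\Gamma_0| \ge 5$ to locate an edge $g$ of $\Gamma$ disjoint from $e$, and hence an arc $\delta \in \marc$ compatible with $g$ that can be isotoped to be completely disjoint from $\alpha \cup \beta$ (since $g$ shares no endpoint with $e$, and $\alpha,\beta$ only occupy the marked points $x,y$). If such a \emph{completely disjoint} $\delta$ exists, then $d(\alpha,\delta) = d(\delta,\beta) = 1$ and we obtain the far stronger bound $d(\alpha,\beta) \le 2$. The genuine difficulty is the edge case analysis: when no edge of $\Gamma$ is completely disjoint from the \emph{configuration} of arcs we have built, i.e.\ when every candidate edge is forced to share an endpoint with $e$. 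This is exactly where one cannot get completely disjoint arcs and must instead chain together the single-shared-endpoint estimates.

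The key steps, in order, are: (1) fix compatible edges and an auxiliary disjoint edge as above; (2) split into cases according to how many edges of $\Gamma$ are forced to meet $e$, mirroring the case structure of the proof of Lemma \ref{lem:cuna_disjunta}; (3) in the generic case, produce a completely disjoint $\delta$ and conclude $d(\alpha,\beta)\le 2$; (4) in the constrained case, produce an intermediate arc $\delta$ sharing exactly one endpoint with each of $\alpha,\beta$, apply Lemma \ref{lem:cuna_disjunta} to get $d(\alpha,\delta)\le 4$, and bound $d(\delta,\beta)$ by a short explicit path (length $\le 2$) exploiting that $\delta$ and $\beta$ share only one endpoint in a favourable configuration, giving the total $\le 6$. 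The main obstacle I anticipate is \textbf{step (4)}: guaranteeing the existence of the intermediate arc $\delta$ with the right endpoint-incidence pattern and simultaneously disjoint from \emph{both} $\alpha$ and $\beta$ (not merely from one of them), since $\alpha$ and $\beta$ sit on the same pair of marked points and may bound an annular or bigon-like region whose topology constrains where $\delta$ can be routed. Controlling this — plausibly by choosing geodesic representatives in minimal position as in the Preliminaries, and checking that a compatible arc on the disjoint-edge vertices can avoid the region bounded by $\alpha \cup \beta$ — is the crux, and the additive budget $4 + 2 = 6$ is exactly what the two-step reduction costs.
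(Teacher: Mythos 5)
Your plan correctly isolates an easy case and a hard case, but the hard case --- which you yourself flag as ``the crux'' in step (4) --- is left unresolved, and the mechanism you sketch for it would not work as stated. Two concrete problems. First, your justification of the generic case is incorrect: an arc $\delta$ compatible with an edge $g$ disjoint from $e$ cannot always be isotoped off $\alpha\cup\beta$ merely because $g$ shares no endpoint with $e$. The loop $\alpha\cup\beta$ may separate $S$, and if the endpoints of $g$ lie in different complementary components, then \emph{every} arc compatible with $g$ must cross $\alpha\cup\beta$. (This is why the paper's proof begins by splitting according to whether $S\setminus(\alpha\cup\beta)$ is connected, and whether some complementary component contains \emph{both} endpoints of some edge of $\Gamma$; your dichotomy ``every candidate edge shares an endpoint with $e$'' conflates this topological obstruction with a combinatorial one that the hypothesis already rules out.) Second, in the constrained case your intermediate arc $\delta$ is required to be disjoint from \emph{both} $\alpha$ and $\beta$ while sharing one endpoint with them, and you then claim $d(\delta,\beta)\le 2$. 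A length-two path requires an arc completely disjoint from $\delta\cup\beta$, which is precisely the kind of arc whose existence the separation obstruction forbids in the constrained case; you give no construction, and the budget $4+2$ therefore has no support.

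The missing idea, which is how the paper resolves the hard case, is to \emph{drop} the requirement that auxiliary arcs avoid both $\alpha$ and $\beta$: although $\alpha\cup\beta$ may separate $S$, the surface cut along $\alpha$ alone is connected (the interior of $\beta$ bridges the two complementary components), and likewise for $\beta$ alone. Hence for an edge $f_1$ of $\Gamma$ disjoint from $e$ one can choose $\gamma_1$ compatible with $f_1$ with $i(\alpha,\gamma_1)=0$ but allowed to cross $\beta$; since $f_1\cap e=\emptyset$, the arcs $\gamma_1$ and $\alpha$ are completely disjoint, so $d(\alpha,\gamma_1)=1$. Choosing a second edge $f_2$ sharing at most one endpoint with $e\cup f_1$, and an arc $\gamma_2$ compatible with $f_2$ with $i(\beta,\gamma_2)=0$ and disjoint from $\gamma_1$, one concludes by a short case analysis: if $f_2\cap e=\emptyset$, then $d(\beta,\gamma_2)=1$ and either $\gamma_1,\gamma_2$ are completely disjoint (path of length $3$) or they share one endpoint and $d(\gamma_1,\gamma_2)\le 4$ by Lemma \ref{lem:cuna_disjunta}; if instead $f_2$ meets $e$, then $f_2$ is disjoint from $f_1$, so $d(\gamma_1,\gamma_2)=1$ while $d(\beta,\gamma_2)\le 4$ by Lemma \ref{lem:cuna_disjunta}. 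Either way $d(\alpha,\beta)\le 6$. Without this one-sided-disjointness trick (or some substitute for it), your step (4) cannot be completed.
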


\begin{proof}
If the complement of $\alpha \cup \beta$ is connected, then there is an arc completely disjoint from both, as we are assuming $|\Gamma_0| \ge 
5$. Thus, assume $\alpha \cup \beta$ separate $S$. If any of the connected components of the complement of $\alpha \cup \beta$ contains a pair of punctures spanning an edge of $\Gamma$, then again there exists an arc completely disjoint from $\alpha$ and $\beta$, and we are done. 

Thus suppose we are not in any of the above situations.  Let $e$ be the edge of $\Gamma$ which $\alpha$ and $\beta$ are compatible with. By the hypothesis on $\Gamma$, there exists an edge $f_1$ of $\Gamma$ disjoint from $e$; similarly, as $|\Gamma_0| \ge 5$, there exists an edge $f_2$ which shares at most one endpoint with $e\cup f_1$. We may choose disjoint arcs $\gamma_1,\gamma_2 \in \ma(S,\Gamma)$ such that $\gamma_i$ is compatible with $f_i$, and $i(\alpha, \gamma_1) = i(\beta, \gamma_2) = 0$. At this point, there are two cases to consider. 

Suppose first  that $f_2\cap e= \emptyset$.  If the arcs $\gamma_1$ and $\gamma_2$ are completely disjoint, then the sequence $\alpha, \gamma_1, \gamma_2,\beta$ yields a path of length three in $\ma(S,\Gamma)$. Otherwise, $\gamma_1$ and $\gamma_2$ satisfy the hypotheses of Lemma \ref{lem:cuna_disjunta} and therefore are at distance at most four, which implies the desired result. 

If, on the other hand, $f_2$ shares one endpoint with $e$, then $d(\beta,\gamma_2)\le 4$, by Lemma \ref{lem:cuna_disjunta}. Since $d(\gamma_1,\alpha) =1$, we are done. \end{proof}

We are finally ready to prove our first main result:

\begin{proof}[Proof of Theorem \ref{main:conn}]
If there existed one edge whose complement contains no edges, then we would be able to find a vertex of $\ma(S,\Gamma)$ with no neighbours, as any other arc would share at least one endpoint with it. 

For the other implication, suppose that the complement of every edge contains at least one edge. Let $\alpha$ and $\beta$ be any arcs in $\ma(S,\Gamma)$. Choose an arc $\beta'$, disjoint from $\beta$ and with the same endpoints as $\alpha$. Appealing to Lemmas \ref{lem:cuna_disjunta} or \ref{lem:mismos_endpoints} if necessary, we know that $d(\beta, \beta') \le 6$. Now, the tuple  $\mathcal{U}(\alpha^{v_1}, \beta'^{v_2})$ defines a sequence of vertices in $\ma(S,\Gamma)$; by Lemma \ref{lem:mismos_endpoints}, any two consecutive arcs of this sequence are at distance at most $6$ in $\ma(S,\Gamma)$, and thus we are done. 
\end{proof}

\subsection{Graphs with few vertices} Before we end this section, we make some comments on the need for a lower bound on $|\Gamma_0|$. First, observe that if $\ma(S,\Gamma)$ is connected then $|\Gamma_0|\ge 4$. Indeed,  as was the case in the proof of Theorem \ref{main:conn}, if this were not the case we would be able to find a vertex of $\ma(S,\Gamma)$ with no neighbours, since any other arc on $\ma(S,\Gamma)$ would share at least one endpoint with it. In other words, we have proved: 

\begin{corollary}
Let $\Gamma$ be a finite simplicial graph with $|\Gamma_0|\le 3$. Then $\ma(S,\Gamma)$ is not connected. 

\end{corollary}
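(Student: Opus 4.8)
The plan is to show that, under the hypothesis $|\Gamma_0|\le 3$, the complex $\ma(S,\Gamma)$ contains no $1$-simplices at all, while still possessing at least two vertices; a simplicial complex with at least two vertices and no edges is disconnected.

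First I would reduce the problem to $\Gamma_0$. An arc is compatible with $\Gamma$ precisely when its two endpoints coincide with the endpoints of some edge $e$ of $\Gamma$. Since an isolated vertex is the endpoint of no edge, every compatible arc has both of its endpoints among the vertices of $\Gamma_0$. Consequently the vertex set of $\ma(S,\Gamma)$ is unaffected if we replace $\Gamma$ by $\Gamma_0$, and we may assume that $\Gamma=\Gamma_0$ has at most three vertices and no isolated vertex; in particular it carries at least one edge.

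The core of the argument is then a counting observation. Two vertices of $\ma(S,\Gamma)$ span an edge only when the corresponding arcs are \emph{completely disjoint}, i.e.\ disjoint including at their endpoints. Each compatible arc uses exactly two of the at most three available marked points, say among $\{a,b,c\}$. By the pigeonhole principle, any two such pairs of points must overlap (as $2+2>3$), so any two compatible arcs share at least one endpoint and hence are never completely disjoint. Therefore $\ma(S,\Gamma)$ has no edges. Finally, since $S$ has negative Euler characteristic and at least two marked points, there are infinitely many isotopy classes of essential arcs joining the endpoints of a fixed edge of $\Gamma_0$, each a distinct vertex of $\ma(S,\Gamma)$; thus $\ma(S,\Gamma)$ is a nonempty discrete vertex set with no edges, and so is disconnected.

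I do not expect a genuine obstacle here: the entire statement rests on the elementary fact that two two-element subsets of a three-element set must intersect. The only points deserving a little care are the reduction to $\Gamma_0$, needed to discard isolated vertices, and the verification that $\ma(S,\Gamma)$ really has at least two vertices, so that the conclusion ``no edges'' yields disconnectedness rather than the trivially connected one-point complex.
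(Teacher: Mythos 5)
Your proof is correct and follows essentially the same route as the paper: since any two compatible arcs must each use two of the at most three non-isolated vertices of $\Gamma$, they share an endpoint, so $\ma(S,\Gamma)$ has no edges and hence no vertex has a neighbour. Your write-up is slightly more careful than the paper's one-line argument, as you explicitly note the reduction to $\Gamma_0$ and verify that the complex has at least two vertices (so that ``no edges'' indeed forces disconnectedness).
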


Combining the above lemma with Theorem \ref{main:conn}, the only outstanding case to consider is  $|\Gamma_0| = 4$. We will prove: 

\begin{proposition}\label{prop:gamma_cuatro}
Let $\Gamma$ be a finite simplicial graph with $|\Gamma_0|= 4$. Then $\ma(S,\Gamma)$ is connected if and only $S$ has positive genus and $\Gamma_0$ has exactly two edges, which are disjoint.
\end{proposition}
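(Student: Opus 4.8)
The plan is to organise everything around the list of graphs $\Gamma_0$ that survive the easy (necessity) direction of Theorem~\ref{main:conn}, which does not use $|\Gamma_0|\ge 5$: if some edge $e$ of $\Gamma$ has no vertex-disjoint edge, then every other compatible arc shares an endpoint with the arcs compatible with $e$, so these are isolated vertices and $\ma(S,\Gamma)$ is disconnected. On four vertices, ``every edge admits a vertex-disjoint edge'' forces the edge set to be a union of the three perfect matchings of $\{1,2,3,4\}$, and the no-isolated-vertex hypothesis leaves exactly three shapes for $\Gamma_0$: a single matching (two disjoint edges), the $4$-cycle (two matchings), or $K_4$ (three matchings). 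The first thing I would record is the structural observation special to $|\Gamma_0|=4$: only four marked points carry arcs, so two compatible arcs can be completely disjoint only when their endpoint pairs are complementary. Hence every edge of $\ma(S,\Gamma)$ joins arcs whose underlying pairs lie in a common matching; for the $4$-cycle or $K_4$ this exhibits $\ma(S,\Gamma)$ as a disjoint union of two or three nonempty ``sectors'' (one per matching present) with no edges between them, so $\ma(S,\Gamma)$ is disconnected. This disposes of all cases except the single matching $\Gamma_0=\{\{1,2\},\{3,4\}\}$, for which the content of the proposition is a genus dichotomy.

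For the single matching in genus $0$ I would prove disconnection via the branched double cover $\pi\colon \Sigma\to S$ over the four marked points, which is a (possibly punctured) torus. Each compatible arc $\delta$ lifts to a simple closed curve $\tilde\delta=\pi^{-1}(\delta)$, and capping the punctures defines a class $\sigma(\delta)\in H_1(T^2)/{\pm}$. Two lemmas then finish the case. First, $\sigma(\delta)$ is a nonzero primitive class: since $\delta$ joins two distinct punctures its complement in the genus-$0$ surface $S$ is connected, and the double cover of that complementary disk is branched over the two remaining marked points, hence connected, so $\tilde\delta$ is non-separating. Second, if $\delta,\delta'$ are completely disjoint (adjacent in $\ma(S,\Gamma)$) then their lifts are disjoint, so after capping they are disjoint simple closed curves on the torus, hence isotopic and $\sigma(\delta)=\sigma(\delta')$. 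Thus $\sigma$ is constant on connected components; as several slopes are realised by honest type-a arcs, $\ma(S,\Gamma)$ has infinitely many components. Note this argument is insensitive to extra punctures, exactly as required.

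For the single matching in positive genus I would prove connectivity by the unicorn method, reduced to one distance bound. Every arc has a neighbour of the opposite type (the complement of an arc between distinct punctures is connected, so a disjoint arc between the other two punctures exists), so it suffices to connect any two type-a arcs $\alpha,\alpha'$. Putting them in minimal position and forming the unicorn path $\mathcal U(\alpha^{1},\alpha'^{2})$ \emph{based at the two distinct punctures} $1$ and $2$, every unicorn arc runs from $1$ to $2$ and is therefore again type-a, while consecutive unicorns are disjoint by the quoted lemma of \cite{HPW15}. So the whole problem collapses to the analogue of Lemma~\ref{lem:mismos_endpoints}: \emph{two disjoint type-a arcs are at bounded distance}. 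If some type-b arc is disjoint from both, then $d(\alpha,\alpha')\le 2$; this happens precisely when $3$ and $4$ lie in the same complementary component of the circle $\alpha\cup\alpha'$. The remaining possibility is that $\alpha\cup\alpha'$ separates $3$ from $4$, and this is where positive genus must be used: I would route an auxiliary type-a arc through a handle to build $\alpha''$ that shares a type-b neighbour with each of $\alpha$ and $\alpha'$, producing a uniformly bounded detour.

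The \textbf{main obstacle} is exactly this separated subcase of the last step: constructing the handle-rerouted bridge $\alpha''$ and bounding the resulting detour. It is the only place the genus hypothesis enters, and it is the precise geometric counterpart of the obstruction detected by the slope invariant $\sigma$ in genus $0$, so getting it right is both the crux of the connectivity half and the confirmation that the dichotomy is sharp.
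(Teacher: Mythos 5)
Your proposal is structurally sound, and the parts you complete are correct, but it splits into pieces that relate to the paper differently. For the graph-theoretic reduction, the paper argues by inspecting the possible graphs on four vertices and noting that two edges $e_1,e_2$ sharing a vertex, with every other edge meeting one of them, give arcs in distinct components; your version --- adjacency in $\ma(S,\Gamma)$ forces complementary endpoint pairs, so each component of $\ma(S,\Gamma)$ lives inside a single perfect matching, whence the $4$-cycle and $K_4$ split into two, respectively three, nonempty sectors --- is the same idea organized more cleanly, and is arguably more rigorous than the paper's terse case check. For genus zero your argument is genuinely different: the paper fixes the pair $p,p'$ spanned by the other edge and shows, via the Jordan Curve Theorem, that ``$p,p'$ lie in the same complementary component'' propagates along any path in $\ma(S,\Gamma)$; you instead pass to the double cover branched over the four vertices of $\Gamma_0$, lift compatible arcs to simple closed curves on a torus, and use the slope in $H_1(T^2)/\pm$ as a component invariant. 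Both are valid; yours buys a stronger conclusion (infinitely many components) and sidesteps the transitivity-of-separation point that the paper leaves implicit, at the cost of heavier machinery and of checking the branched-cover details (well-definedness of the cover with extra punctures, non-separating lifts, disjoint lifts being isotopic on $T^2$), all of which you correctly address.

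The weak point is the positive-genus half, where you follow exactly the paper's route (unicorn reduction to two disjoint type-a arcs, then a bounded bridge in the separating case) but stop at the crux, explicitly labelling the handle-routed bridge ``the main obstacle'' rather than constructing it; this construction is precisely what the paper supplies (its Figure 2, giving a path of length four). The gap is real but fillable, and you should write it out: since $\alpha\cup\alpha'$ is a separating circle, at least one complementary component has positive genus; let $\alpha''$ be a type-a arc obtained by pushing $\alpha$ into that component and tubing it over a handle there. Each of the circles $\alpha''\cup\alpha$ and $\alpha''\cup\alpha'$ then meets the meridian of that handle in a single point, hence is non-separating, so the punctures $3$ and $4$ lie in one complementary component of each; this produces type-b arcs $\delta_1$ disjoint from $\alpha\cup\alpha''$ and $\delta_2$ disjoint from $\alpha''\cup\alpha'$ (complete disjointness is automatic, since arcs of different types share no endpoints), and $\alpha,\delta_1,\alpha'',\delta_2,\alpha'$ is the desired path of length four. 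Until that paragraph is written, your positive-genus direction is a plan rather than a proof, even though the plan is the correct one.
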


\begin{proof}
Assume first that $\ma(S,\Gamma)$ is connected, and assume that $\Gamma_0$ does not satisfy the conclusion of the theorem. Since $|\Gamma_0| = 4$, the graph $\Gamma_0$ is one of the graphs depicted in Figure \ref{fig:graphs_up_to_isom}. For any of them, there are two edges $e_1,e_2$ of $\Gamma$ sharing an endpoint and such that any other edge in $\Gamma$ shares one endpoint with (at least) one of them. If $\alpha_1$ and $\alpha_2$ are arcs compatible with $e_1$ and $e_2$, respectively, there is no path between  $\alpha_1$ and $\alpha_2$ in $\ma(S,\Gamma)$.
 
\begin{figure}
    \centering
    \includegraphics[width=0.7\linewidth]{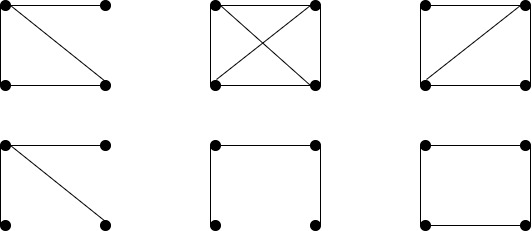}
    \caption{Combinatorial possibilities for a  connected graph with at most four vertices}
    \label{fig:graphs_up_to_isom}
\end{figure}

Therefore, $\Gamma$ has exactly two edges, which are disjoint.
Let $p,p'$ be two marked points of $S$ spanning an edge of $\Gamma$. If the genus of $S$ is zero, there are two disjoint arcs $\alpha,\beta$ on $S$, with the same endpoints, such that $p$ and $p'$ lie on different components of the complement of $\alpha \cup \beta$. We claim that there is no path in $\ma(S,\Gamma)$ between $\alpha$ and $\beta$. To see this, first observe that the Jordan Curve Theorem implies that $\alpha$ and $\beta$ are not at distance two. Suppose we had a path $\alpha=\gamma_0,\gamma_1, \ldots, \gamma_{2k}=\beta$  in $\ma(S,\Gamma)$. Since $\gamma_{2i}$ and $\gamma_{2i+2}$ are at distance $2$, then  $p$ and $p'$ cannot lie in different components of the complement of $\gamma_{2i} \cup \gamma_{2i+2}$. The same is true for $\gamma_{2i}$ and $\gamma_{2i+2k}$ for all $i$ and $k$; in particular, it holds for $\alpha$ and $\beta$, which is a contradiction. 

For the reverse implication, let $\alpha$ and $\beta$ be two arbitrary vertices of $\ma(S,\Gamma)$. Using a unicorn path argument implies that it suffices to check the case when  $\alpha$ and $\beta$ are disjoint and have the same endpoints. If $\alpha\cup \beta$ does not separate $S$, then $\alpha$ and $\beta$ are at distance two. Otherwise, they are at distance four in $\ma(S,\Gamma)$, see Figure \ref{fig:my_label}.

\begin{figure}
    \centering
    \includegraphics[width=0.7\linewidth]{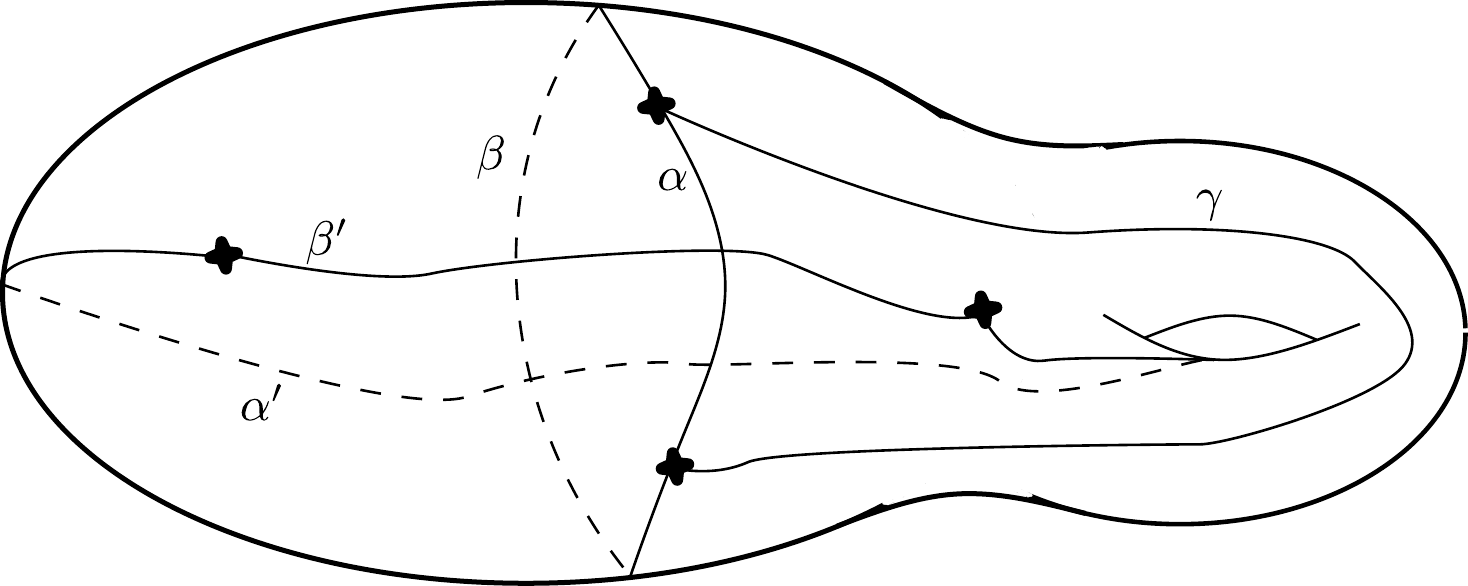}

    \caption{ The arcs $\alpha,\beta$ are connected via the path $\alpha, \alpha',\gamma,\beta', \beta$, on a surface of genus one with four marked points. The general situation is analogous.}
    \label{fig:my_label}
\end{figure}
\end{proof}

\section{Gromov hyperbolicity of $\marc$}
In this section we prove Theorem \ref{main:hyp}. We commence by passing to a related complex that is easier to analyze. Let $\mas(S,\Gamma)$ be the complex with the same vertex set as $\ma(S, \Gamma)$, but where simplices are spanned by  disjoint (but not necessarily completely disjoint) arcs; in other words, we are allowing arcs intersect in their endpoints. A direct consequence of Lemmas \ref{lem:cuna_disjunta} and  \ref{lem:mismos_endpoints} is the following: 

\begin{lemma}
If $\ma(S,\Gamma)$ is connected, then the inclusion map $\ma(S,\Gamma) \hookrightarrow \mas(S,\Gamma)$ is a quasi-isometry.
\label{lem:qi}
\end{lemma}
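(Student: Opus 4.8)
The plan is to show that the identity map on vertices, viewed as a map $\ma(S,\Gamma) \to \mas(S,\Gamma)$, is a quasi-isometry. Since the two complexes share the same vertex set and the map is the identity on vertices, surjectivity on vertices is immediate, so there is no coarse-density issue to address. It therefore suffices to establish a bi-Lipschitz comparison of the two path metrics on this common vertex set. Write $d$ for the metric on $\ma(S,\Gamma)$ and $d_{\mas}$ for the metric on $\mas(S,\Gamma)$.

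The first (easy) direction is that $d_{\mas}(\alpha,\beta) \le d(\alpha,\beta)$ for all vertices $\alpha,\beta$. This is because $\mas(S,\Gamma)$ has \emph{more} edges than $\ma(S,\Gamma)$: completely disjoint arcs are in particular disjoint, so every edge of $\ma(S,\Gamma)$ is also an edge of $\mas(S,\Gamma)$. Hence any edge-path in $\ma(S,\Gamma)$ is also an edge-path in $\mas(S,\Gamma)$ of the same length, giving the inequality.

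The substantive direction is the reverse Lipschitz bound, and this is where Lemmas \ref{lem:cuna_disjunta} and \ref{lem:mismos_endpoints} do the work. First I would reduce to comparing the lengths of edges: it is enough to show there is a constant $K$ (namely $K=6$) such that whenever $\alpha$ and $\beta$ are adjacent in $\mas(S,\Gamma)$ — that is, merely disjoint — we have $d(\alpha,\beta) \le K$. Granting this, any geodesic edge-path in $\mas(S,\Gamma)$ of length $d_{\mas}(\alpha,\beta)$ can be replaced edge-by-edge by an $\ma$-path, yielding $d(\alpha,\beta) \le K \cdot d_{\mas}(\alpha,\beta)$. To prove the edge bound, let $\alpha,\beta$ be disjoint arcs compatible with $\Gamma$; since they are disjoint but possibly not completely disjoint, the only obstruction to them being $\ma$-adjacent is shared endpoints, and as arcs they share $0$, $1$, or $2$ endpoints. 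The case of $0$ shared endpoints is complete disjointness, so $d(\alpha,\beta)=1$; the case of exactly one shared endpoint is precisely the hypothesis of Lemma \ref{lem:cuna_disjunta}, giving $d(\alpha,\beta)\le 4$; and the case of two shared endpoints is exactly Lemma \ref{lem:mismos_endpoints}, giving $d(\alpha,\beta)\le 6$. Taking $K=6$ covers all cases.

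The main (and only genuine) obstacle is conceptual rather than computational: one must be careful that replacing each edge of an $\mas$-geodesic by a bounded-length $\ma$-path yields a genuine $\ma$-path, i.e.\ that the endpoints match up and the concatenation is well-defined. This is automatic since the bounded $\ma$-paths produced by the two lemmas connect the correct endpoints, but it is worth stating explicitly. Combining the two inequalities gives $d_{\mas}(\alpha,\beta) \le d(\alpha,\beta) \le 6\, d_{\mas}(\alpha,\beta)$ for all vertices, which is exactly the statement that the inclusion is a quasi-isometry (in fact a bi-Lipschitz equivalence on the vertex sets, with additive constant zero).
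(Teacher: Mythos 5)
Your proposal is correct and is essentially the paper's own argument: the paper presents Lemma \ref{lem:qi} as a direct consequence of Lemmas \ref{lem:cuna_disjunta} and \ref{lem:mismos_endpoints}, which is exactly your reduction --- the two complexes share a vertex set, every $\ma$-edge is an $\mas$-edge, and an $\mas$-edge joins disjoint arcs sharing $0$, $1$, or $2$ endpoints, giving $\ma$-distance at most $1$, $4$, or $6$ respectively, whence $d \le 6\, d_{\mas}$ and the bi-Lipschitz comparison. The only point worth noting is that invoking those lemmas requires their hypothesis $|\Gamma_0|\ge 5$, which connectedness of $\ma(S,\Gamma)$ alone does not guarantee (by Proposition \ref{prop:gamma_cuatro} there is a four-vertex case); this caveat is equally implicit in the paper, and is harmless where the lemma is actually used, since the standing assumptions of Section 4 ($\Gamma_0$ connected and containing a triangle) rule out that exceptional case.
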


In light of the above lemma, it suffices to establish the hyperbolicity of $\mas(S,\Gamma)$. The main tool will be following result of  Masur--Schleimer\cite{MS13}; here, we use a similar form, as stated by Bowditch in \cite{B14}. Given a subset $C$ of a metric space $X$, denote by $N_k(C)$ the $k$-neighbourhood of $C$ in $X$, i.e., the set of points of $X$ at distance at most $k$ from $C$.

\begin{lemma}[Guessing geodesics lemma]
Let $G$ be a connected graph, and write $d$ for its usual distance function. Let $M>0$ be a constant, and suppose that for any vertices $x,y\in G$ there is a connected subgraph $A(x,y)$, with $x,y\in A(x,y)$, such that:
\begin{enumerate}
    \item If $d(x,y)\leq 1$, then $\diam(A(x,y))\le M$.
    \item For any vertices $x,y,z\in G$, $A(x,y) \subset N_M(A(x,z) \cup A(z,y))$.
\end{enumerate}
Then $G$ is  hyperbolic, with hyperbolicity constant that depends only on $M$.
\label{lem:guessing} 
\end{lemma}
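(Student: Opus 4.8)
The plan is to verify the Rips \emph{thin-triangles} condition for honest geodesic triangles in $G$, with a thinness constant depending only on $M$; since for geodesic graphs the standard definitions of Gromov hyperbolicity are quantitatively equivalent, this implies the conclusion. All of the difficulty lies in bridging between the abstract family $\{A(x,y)\}$ and actual geodesics of $G$, so the technical heart of the argument is the following \emph{fellow-traveling} statement: there is a constant $k=k(M)$ such that, for every pair of vertices $x,y$ and every geodesic $[x,y]$, the Hausdorff distance between $[x,y]$ and $A(x,y)$ is at most $k$.

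Granting fellow-traveling, thin triangles follow in two lines from hypothesis (2). Let $[x,y]\cup[y,z]\cup[z,x]$ be a geodesic triangle and let $p\in[x,y]$. By fellow-traveling choose $a\in A(x,y)$ with $d(p,a)\le k$; by (2) there is $b\in A(x,z)\cup A(z,y)$ with $d(a,b)\le M$; and applying fellow-traveling to whichever of the pairs $(x,z)$, $(z,y)$ supplies $b$, there is a point $c$ on $[x,z]\cup[z,y]$ with $d(b,c)\le k$. Thus $d\big(p,[x,z]\cup[z,y]\big)\le 2k+M$, so every geodesic triangle is $(2k+M)$-thin, as required. Note that both inclusions comprising the Hausdorff bound are genuinely used here: that geodesics lie in a neighbourhood of their $A$-set (to produce $a$ from $p$) and that $A$-sets lie in a neighbourhood of the corresponding geodesic (to produce $c$ from $b$).

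It remains to establish fellow-traveling, and this is where I expect the real work and the \textbf{main obstacle}. Hypothesis (1) handles the base case $d(x,y)\le 1$. For general pairs the natural attempt is midpoint subdivision combined with both orientations of (2): with $m$ the midpoint of $[x,y]$, condition (2) applied to the triple $(x,y,m)$ bounds $A(x,y)$ by the two halves $A(x,m)\cup A(m,y)$, while applied to the triple $(x,m,y)$ it bounds the half $A(x,m)$ in terms of $A(x,y)$ and the other half; one then tries to run an induction on a defect function $\Phi(r)=\sup\{\,d(p,A(x,y)) : d(x,y)\le r,\ p\in[x,y]\,\}$ together with its companion for the reverse inclusion. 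The catch is that this crude recursion only yields $\Phi(r)=O(\log r)$ (and the reverse inclusion fares even worse, giving a bound linear in $d(x,y)$): the whole content of the lemma is to upgrade such a slowly-growing estimate to a \emph{uniform} constant $k(M)$. I would follow Bowditch's refinement \cite{B14}: rather than subdividing according to the total length, one argues at the scale dictated by the putative defect $R$ itself, and uses the connectivity of $A(x,y)$ — a connected subgraph joining $x$ to $y$ cannot remain $R$-far from a point $p\in[x,y]$ without forcing, through iterated applications of the slimness hypothesis (2) at that scale, a contradiction with (1) once $R$ exceeds a bound depending only on $M$. This scale-adapted extremal step is the delicate core of the proof; everything preceding it is formal.
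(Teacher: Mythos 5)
The first thing to say is that the paper contains no proof of this lemma: it is imported as a known result of Masur--Schleimer \cite{MS13}, in the formulation given by Bowditch \cite{B14}, and is used as a black box. So there is no ``paper's own proof'' to compare against; your proposal has to be judged as a self-contained argument, possibly leaning on the references it names.

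Judged that way, it has a genuine gap, located exactly where you flag it. The formal part is correct: thin triangles do follow from the fellow-traveling claim together with hypothesis (2), and iterating (2) along a dyadic subdivision of any path $P$ from $x$ to $y$, with (1) at the bottom scale, gives $A(x,y)\subset N_{M(\lceil\log_2\ell(P)\rceil+1)}(P)$. (Incidentally, the reverse inclusion is not ``linear'': if $A(x,y)\subset N_h([x,y])$, then, since $A(x,y)$ is connected and contains $x$ and $y$, a discrete intermediate-value argument applied to nearest-point projection gives $[x,y]\subset N_{2h+1}(A(x,y))$; so both inclusions hold with $h=O(M\log d(x,y))$.) The problem is that the passage from $O(M\log d(x,y))$ to a constant $k(M)$ \emph{is} the lemma, and your proposal does not prove it: it defers it to ``Bowditch's refinement'' with a description that, taken literally, does not work. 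Suppose $p\in[x,y]$ realizes the maximal defect $R$, and localize to the subsegment $[a,b]\subset[x,y]$ of radius $2R$ about $p$. The only way hypothesis (2) lets you compare $A(x,y)$ with the localized set $A(a,b)$ is through the chain $A(a,b)\subset N_{2M}\bigl(A(a,x)\cup A(x,y)\cup A(y,b)\bigr)$, and the only control available on the far sets $A(a,x)$ and $A(y,b)$ is the logarithmic bound at the \emph{full} scale $d(x,y)$. Running the estimate then returns $R\le O(M\log R)+O(M\log d(x,y))$, which is just the logarithmic bound again: the recursion reintroduces the quantity you are trying to eliminate, and no contradiction with (1) ever appears. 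The published proofs close this loop with genuinely different machinery --- Bowditch, for instance, uses the logarithmic bound to fill any cycle of length $n$ by a disk of area $O(n\log n)$ and then invokes the nontrivial theorem that a subquadratic isoperimetric inequality implies hyperbolicity --- and the cleanest repair of your sketch would likewise be to quote such a result (sublinearly thin triangles imply uniform hyperbolicity) rather than the extremal argument you describe. As it stands, the delicate core is missing, and the hint given for it would fail.
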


In order to apply Lemma \ref{lem:guessing}, we use the unicorn path machinery of Hensel-Przytycki-Webb \cite{HPW15}. As mentioned in the introduction, the main hurdle in this direction is that the unicorn path between two vertices of $\mas(S,\Gamma)$ need not be contained in $\mas(S,\Gamma)$.



\subsection{A family of subpaths in $\mas(S,\Gamma)$}
From now on, we will assume that $\Gamma_0$ is a finite, connected simplicial graph that contains a triangle $\Delta$, and such that $\ma(S,\Gamma)$ is connected. Denote by $e_i$ the three edges of $\Delta$. To any arc $\alpha \in \mas(S,\Gamma)$ we associate, once and for all, three pairwise disjoint arcs $a_i$, called the {\em triangle arcs} associated to $\alpha$, such that: 
\begin{itemize}
    \item $a_i$ is compatible with $e_i$;
    \item $a_1 \cup a_2 \cup a_3$ bound a disk on $S$; 
    \item $i(\alpha,a_i)=0$ for $i=1,2,3$.
\end{itemize}
(As may have already become apparent, we are using Greek letters for arcs, and  Roman letters for their associated triangle arcs.) In the particular case when $\alpha$ is already compatible with some $e_i$, then we simply set $a_i:=\alpha$. Note that the existence of the arcs $a_i$ is easily verified using the Change of Coordinates Principle \cite[Section 1.3]{FM}.

Now, given two arcs $\alpha, \beta \in \mas(S,\Gamma)$, we define $A'(\alpha,\beta)$ to be the set of all those unicorn arcs defined by $a_i$ and $b_j$ that are elements of $\mas(S,\Gamma)$. Then, we define \[A(\alpha,\beta):=N_1(A'(\alpha,\beta)).\]
 Clearly, $A(\alpha, \beta)$ is connected and $\alpha,\beta \in A(\alpha,\beta)$. Now, we are left to prove that both conditions in Lemma \ref{lem:guessing} are satisfied. In what follows, we will write $v_1,v_2,v_3$ for the vertices of the triangle $\Delta$, where the edge $e_i$ has vertices $v_i$ and $v_{i+1}$. 

\begin{lemma}
For every $\alpha,\beta,\gamma \in \mas(S,\Gamma)$, \[A(\alpha,\beta)\subset N_2(A(\alpha,\gamma)\cup A(\gamma, \beta)).\]
\label{lem:thin}
\end{lemma}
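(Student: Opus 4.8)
The plan is to verify the second hypothesis of the Guessing Geodesics Lemma (Lemma~\ref{lem:guessing}) with constant $M=2$, by transferring unicorn arcs along the triangle $\Delta$ exactly as in Hensel--Przytycki--Webb, while keeping careful track of compatibility with $\Gamma$. Fix $\alpha,\beta,\gamma\in\mas(S,\Gamma)$ and let $x\in A(\alpha,\beta)=N_1(A'(\alpha,\beta))$. By definition there is a unicorn arc $w\in A'(\alpha,\beta)$ with $d(x,w)\le 1$, and $w$ lies in some unicorn path $\mathcal{U}(a_i^{s},b_j^{t})$ determined by triangle arcs $a_i$ of $\alpha$ and $b_j$ of $\beta$, with chosen endpoints $s,t$. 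Since $a_i$ is compatible with $e_i$ and $b_j$ with $e_j$, the base points satisfy $s\in\{v_i,v_{i+1}\}$ and $t\in\{v_j,v_{j+1}\}$; in particular both are vertices of $\Delta$. I would then apply Lemma~\ref{lem:unicorn_triangle} with $a_i^{s}$, $b_j^{t}$ and a suitably based triangle arc $c_k^{r}$ of $\gamma$, obtaining a unicorn arc $w^{*}\in\mathcal{U}(a_i^{s},c_k^{r})\cup\mathcal{U}(c_k^{r},b_j^{t})$ with $i(w,w^{*})=0$.

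The crucial point---and the main obstacle---is to guarantee that $w^{*}$ is itself a vertex of $\mas(S,\Gamma)$, i.e.\ compatible with $\Gamma$; this is precisely the difficulty that a transferred unicorn arc need not be compatible. Here the triangle structure does the work. Every unicorn arc in $\mathcal{U}(a_i^{s},c_k^{r})$ has endpoints $s$ and $r$, and every unicorn arc in $\mathcal{U}(c_k^{r},b_j^{t})$ has endpoints $r$ and $t$; moreover $s,t,r$ are all vertices of $\Delta$. Since $\Delta$ is a triangle, any two \emph{distinct} of its vertices span an edge of $\Gamma$. It therefore suffices to choose the base point $r$ of the triangle arc of $\gamma$ to be a vertex of $\Delta$ distinct from both $s$ and $t$: as $\Delta$ has three vertices and $\{s,t\}$ has at most two elements, such a vertex $r=v_m$ exists, and one then takes $c_k$ to be either of the two triangle arcs of $\gamma$ having $v_m$ as an endpoint. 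With this choice the two endpoints of $w^{*}$ are distinct vertices of $\Delta$, so $w^{*}$ is compatible with an edge of $\Delta\subset\Gamma$, and hence lies in $A'(\alpha,\gamma)\cup A'(\gamma,\beta)\subset A(\alpha,\gamma)\cup A(\gamma,\beta)$.

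Finally I would assemble the distance estimate. Since $i(w,w^{*})=0$, the arcs $w$ and $w^{*}$ are disjoint, hence adjacent (or equal) in $\mas(S,\Gamma)$, so $d(w,w^{*})\le 1$. Combining with $d(x,w)\le 1$ gives
\[
d(x,w^{*})\le d(x,w)+d(w,w^{*})\le 2,
\]
whence $x\in N_2(A(\alpha,\gamma)\cup A(\gamma,\beta))$, as required. The degenerate situations (for instance $w$ equal to a triangle arc $a_i$, or $w^{*}=w$) are handled automatically, since a triangle arc is the initial vertex of the relevant unicorn path and is compatible with $\Gamma$ by construction. The only genuine subtlety is the compatibility argument of the second paragraph; once the pigeonhole choice of $r$ is in place, the remainder is the standard unicorn-triangle estimate read off inside $\mas(S,\Gamma)$.
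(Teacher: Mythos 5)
Your proof is correct and follows essentially the same route as the paper's: reduce to a point of $A'(\alpha,\beta)$, apply Lemma~\ref{lem:unicorn_triangle} with a triangle arc of $\gamma$ based at the third vertex of $\Delta$, and note that the transferred unicorn arcs have endpoints spanning an edge of $\Delta$, hence lie in $\mas(S,\Gamma)$. The only difference is cosmetic: where the paper normalizes ``without loss of generality'' to base points $v_1,v_2$ and takes $c_3^{v_3}$, you make the pigeonhole choice of the third vertex and the compatibility check explicit, which the paper leaves as an ``observe.''
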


\begin{proof}
It suffices to prove that $A'(\alpha,\beta)\subset N_1(A(\alpha,\gamma)\cup A(\gamma, \beta))$. To this end, let $\delta \in A'(\alpha, \beta)$; without loss of generality we may assume that $\delta \in \mathcal{U}(a_i^{v_1}, b_j^{v_2})$. By Lemma \ref{lem:unicorn_triangle}, there is a $
\delta^* \in \mathcal{U}(a_i^{v_1}, c_3^{v_3})     \cup   
\mathcal{U}(c_3^{v_3}, b_j^{v_2})
$ such that $d(\delta, \delta^*)=1$. Observe that $\delta^* \in A(\alpha,\gamma)\cup A(\gamma, \beta) $, which gives the result.
\end{proof}

\begin{lemma}
Let $\alpha, \beta \in \mas(S,\Gamma)$ with $d(\alpha, \beta) = 1$. Then \[\diam(A(\alpha,\beta)) \leq 2\diam(\Gamma_0)+9.\]
\label{lem:diam}
\end{lemma}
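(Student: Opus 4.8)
The plan is to use that $A(\alpha,\beta)=N_1(A'(\alpha,\beta))$ together with the observation that every vertex of $A'(\alpha,\beta)$ has both of its endpoints among the triangle vertices $v_1,v_2,v_3$: this holds for the triangle arcs $a_i,b_j$ by construction, and hence for every unicorn arc built from an $a_i$ and a $b_j$. First I would isolate a bounded ``core''. Since $i(\alpha,a_i)=0=i(\beta,b_j)$ and the triangle arcs $a_1,a_2,a_3$ (resp. $b_1,b_2,b_3$) are pairwise disjoint, in $\mas(S,\Gamma)$ we have $d(\alpha,a_i)\le 1$, $d(a_i,a_j)\le 1$, $d(\beta,b_j)\le 1$ and $d(b_i,b_j)\le 1$; combined with $d(\alpha,\beta)=1$ this gives $\diam(C)\le 3$ for $C=\{\alpha,\beta,a_1,a_2,a_3,b_1,b_2,b_3\}$. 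The whole estimate then reduces to the single claim that \emph{every unicorn arc $\delta\in A'(\alpha,\beta)$ built from $a_i$ and $b_j$ satisfies $d(\delta,a_i)\le \diam(\Gamma_0)+2$}, so that $A'(\alpha,\beta)$ lies in the $(\diam(\Gamma_0)+2)$-neighbourhood of $C$. Granting the claim, for $x,y\in A(\alpha,\beta)=N_1(A'(\alpha,\beta))$ I choose $\delta_x,\delta_y\in A'(\alpha,\beta)$ with $d(x,\delta_x),d(y,\delta_y)\le 1$ and route $x\to\delta_x\to C\to\delta_y\to y$, which yields
\[
d(x,y)\le 1+(\diam(\Gamma_0)+2)+3+(\diam(\Gamma_0)+2)+1=2\diam(\Gamma_0)+9.
\]

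For the claim, write $\delta=a_i'\cup b_j'$ where $a_i'\subset a_i$ runs from the shared triangle vertex $v$ to the switch point $p\in a_i\cap b_j$ and $b_j'\subset b_j$ runs from the triangle vertex $u$ to $p$; thus $\delta$ and $a_i$ share the endpoint $v$ and the entire subarc $a_i'$. One is tempted to quote Lemma~\ref{lem:cuna_disjunta} or Lemma~\ref{lem:mismos_endpoints}, but these require disjointness, and here $\delta$ and $a_i$ may cross: the piece $b_j'$ is only guaranteed disjoint from $\beta$, not from $a_i$, and $i(\delta,a_i)$ can be arbitrarily large. The plan is therefore to connect $\delta$ to $a_i$ not by resolving their crossings but by leaving through a part of $S$ that misses them: pick a $\Gamma$-compatible arc completely disjoint from $\delta$, pick one completely disjoint from $a_i$, and join these two by a chain of $\Gamma$-compatible arcs indexed by a geodesic of $\Gamma_0$ between the corresponding marked points. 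The two ``escape'' steps cost $1$ each and the transport along the $\Gamma_0$-geodesic contributes the $\diam(\Gamma_0)$ term, which is exactly how the dependence on $\diam(\Gamma)$ advertised in the introduction enters.

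The main obstacle is this last connection. The difficulty is twofold. First, one must arrange that along the $\Gamma_0$-geodesic the successive compatible arcs can be realised pairwise disjointly, which forces the moving endpoints to be kept clear of the current configuration; here the slack coming from $|\Gamma_0|\ge 5$ and from the auxiliary triangle arcs, handled by the same disjointness manoeuvres as in the proofs of Lemmas~\ref{lem:cuna_disjunta} and \ref{lem:mismos_endpoints}, should do the job. Second, and more delicately, one must ensure that the walk can be taken of length at most $\diam(\Gamma_0)$ rather than a fixed multiple of it, since it is this that pins the final bound to $2\diam(\Gamma_0)+9$; carrying this out carefully, while keeping track of the at most two extra steps needed near each end, is the technical heart of the lemma.
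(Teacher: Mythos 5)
Your reduction is sound as far as it goes: the core $C=\{\alpha,\beta,a_1,a_2,a_3,b_1,b_2,b_3\}$ does have diameter at most $3$ in $\mas(S,\Gamma)$, and if every $\delta\in A'(\alpha,\beta)$ were within $\diam(\Gamma_0)+2$ of $C$, your arithmetic would indeed give $2\diam(\Gamma_0)+9$. The gap is that your Claim --- $d(\delta,a_i)\le\diam(\Gamma_0)+2$ --- \emph{is} the content of the lemma, and the escape-and-transport plan you sketch for it cannot be completed as described. The chain ``indexed by a geodesic of $\Gamma_0$'' needs consecutive arcs to be disjoint on the surface, not merely compatible with adjacent edges of $\Gamma$; such a chain can be built inductively anchored at one end (each new arc chosen in the complement of the previous one), but then its far end is an essentially arbitrary compatible arc, with no reason to be disjoint from, or even close to, $a_i$. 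Anchoring \emph{both} ends of the transport simultaneously is precisely the difficulty; you defer it as ``the technical heart'', but it is the whole lemma, not a technicality.

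There is a decisive way to see that the plan is missing an essential input: your argument for the Claim never uses the hypothesis $d(\alpha,\beta)=1$, nor the unicorn structure of $\delta$ beyond its endpoints. But the Claim is false without that hypothesis: $b_j$ itself belongs to $A'(\alpha,\beta)$ (it is the terminal arc of $\mathcal{U}(a_i^{v},b_j^{u})$ and is compatible with $\Gamma$), and $d(a_i,b_j)\ge d(\alpha,\beta)-2$, which exceeds $\diam(\Gamma_0)+2$ as soon as $\alpha$ and $\beta$ are far apart. So any correct proof must thread $d(\alpha,\beta)=1$ into the transport step, and an argument that is insensitive to it cannot succeed. This is exactly what the paper's proof does: it fixes a path $w_0=v_1,w_1=v_2,\dots,w_k$ in $\Gamma$ from the triangle to an endpoint of $\alpha$, builds two parallel families of auxiliary arcs $\alpha_i$ (disjoint from $\alpha$ and all the $a_j$) and $\beta_i$ (disjoint from $\beta$ and all the $b_j$), and slides $\gamma$ along these families by an inductive application of Lemma \ref{lem:unicorn_triangle}, one application per step of the path; at the last steps the hypothesis $d(\alpha,\beta)=1$ enters through degenerate two-element unicorn paths such as $\mathcal{U}(\alpha^{x_2},\beta^{v})=\{\alpha,\beta\}$ and $\mathcal{U}(\alpha_{k-1}^{w_{k-1}},\alpha^{x_2})=\{\alpha,\alpha_{k-1}\}$. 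Lemma \ref{lem:unicorn_triangle} --- which your sketch never invokes --- is the mechanism that keeps both ends of the interpolation under control; the disjointness manoeuvres of Lemmas \ref{lem:cuna_disjunta} and \ref{lem:mismos_endpoints} that you cite instead are not a substitute for it.
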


\begin{proof}
We will write $k= \diam(\Gamma_0)$, for compactness. By definition, it suffices to see that \[
\diam(A'(\alpha,\beta)) \leq 2k+7;
\]
for this, we will show that $d(\gamma, \{\alpha,\beta\}) \leq k+3$ for any $\gamma\in A'(\alpha,\beta)$.  Let $\gamma\in A'(\alpha,\beta)$, and recall that $a_i$ and $b_j$ are the triangle arcs of $\alpha$ and $\beta$, respectively. Up to renaming, we may assume that $ \gamma \in \mathcal{U}(a_m^{v_1}, b_n^{v_2})$. Let $\partial \alpha = \{x_1,x_2\}$ and $\partial \beta = \{y_1,y_2\}$. Choose a simple path \[
X:w_0=v_1, w_1=v_2, w_2, \dots, w_{k-1}, w_k=x_1
\]
in $\Gamma$; however, if $w_l \in \{y_1,y_2\}$ for some $l\le k$, then we truncate $X$ at $w_l$. 

Next, the triangle arcs $a_j$ bound a topological disk on $S$, and hence we may choose pairwise disjoint arcs $\alpha_i$, for $i=1,\ldots, k-1$, such that $\alpha_i$ is compatible with the edge spanned by $w_i$ and $w_{i+1}$, and \[i(\alpha_i, \alpha)= i(\alpha_i,a_j)= 0,\] for $j=1,2,3$. Analogously, we select pairwise disjoint arcs $\beta_1, \ldots, \beta_{k-1}$ such that $\beta_i$ is compatible with the edge spanned by $w_i$ and $w_{i+1}$, and \[i(\beta_i, \beta)= i(\beta_i,b_j)= 0,\] for $j=1,2,3$. Finally, we set $\alpha_0=a_m$ and $\beta_0=b_n$. 

\medskip

\noindent{\bf Claim.} Either $d(\gamma, \{\alpha, \beta\})\leq k+1$, or there exists $\gamma'\in \mathcal{U}(\alpha_{k-1}^{w_{k-1}}, \beta_{k-1}^{w_{k}})\cup \mathcal{U}(\alpha_{k-1}^{w_{k}}, \beta_{k-1}^{w_{k-1}})$ with $d(\gamma,\gamma')\leq k$. 

\medskip

We accept the claim as true for the time being and continue with our argument; we will establish the claim at the end of the proof. If we are in the first case of the claim, then we are done; hence, assume that we are in the second case. We distinguish two subcases: 

\medskip

\noindent{\bf{\em {Case 1:}}} $\gamma'\in \mathcal{U}(\alpha_{k-1}^{w_{k-1}}, \beta_{k-1}^{w_{k}})$.  We apply Lemma \ref{lem:unicorn_triangle}, obtaining $\gamma'' \in \mas(S,\Gamma)$, with $d(\gamma', \gamma'') \le 1$, such that \[\gamma''\in \mathcal{U}(\alpha_{k-1}^{w_{k-1}}, \alpha^{x_2})\cup \mathcal{U}(\alpha^{x_2}, \beta_{k-1}^{w_{k}}).\] Again, there are two cases to consider: 
\smallskip

{\em Subcase (1a):} Suppose first that $\gamma''\in \mathcal{U}(\alpha_{k-1}^{w_{k-1}}, \alpha^{x_2})$, and note that $ \mathcal{U}(\alpha_{k-1}^{w_{k-1}}, \alpha^{x_2}) = \{\alpha, \alpha_{k-1}\}$. As $d(\gamma, \gamma')\leq k$, $d(\gamma', \gamma'')\leq 1$ and $d(\gamma'', \alpha)\leq 1$, then $d(\gamma, \{\alpha,\beta\})\leq k+2$, and we are done.

\smallskip

{\em Subcase (1b):} Assume now that $\gamma'' \in \mathcal{U}(\alpha^{x_2}, \beta_{k-1}^{w_{k}})$. Lemma \ref{lem:unicorn_triangle} yields the existence of an arc  $\gamma''' \in \mas(S,\Gamma)$, with $d(\gamma'', \gamma''')\leq 1$, such that
$
\gamma''' \in \mathcal{U}(\alpha^{x_2}, \beta^v)\cup\mathcal{U}(\beta^v, \beta_{k-1}^{w_{k}}).
$
Note that $\mathcal{U}(\alpha^{x_2}, \beta^v)=\{\alpha, \beta\}$ and $\mathcal{U}(\beta^v, \beta_{k-1}^{w_{k}})=\{\beta, \beta_{k-1}\}$. In either situation, $d(\gamma''', \{\alpha,\beta\})\leq 1$ and hence $d(\gamma, \{\alpha,\beta\})\leq k+3$, as desired.

\medskip

\noindent{{\bf \em {Case 2:}}} $\gamma' \in \mathcal{U}(\alpha_{k-1}^{w_{k}}, \beta_{k-1}^{w_{k-1}})$.
Again, by Lemma \ref{lem:unicorn_triangle} there is $\gamma'' \in \mas(S,\Gamma)$,  with $d(\gamma', \gamma'')\leq 1$, such that \[
\gamma'' \in 
\mathcal{U}(\alpha_{k-1}^{w_{k}}, \alpha^{w_k})\cup \mathcal{U}(\alpha^{w_k}, \beta_{k-1}^{w_{k-1}})
,\] and we proceed as in the previous case. 


       


\medskip

At this point, all that remains is to prove the above Claim. 
To this end, we will apply an inductive argument based on the structure of $\Gamma$. First, since $\gamma \in \mathcal{U}(a_m^{v_1}, b_n^{v_2})$, Lemma \ref{lem:unicorn_triangle} implies that there is \[\gamma_1\in \mathcal{U}(\alpha_0^{w_0},\alpha_1^{w_2})\cup\mathcal{U}(\alpha_1^{w_2},\beta_0^{w_1})\] with $d(\gamma,\gamma_1)\leq 1$. If $\gamma_1\in \mathcal{U}(\alpha_0^{w_0},\alpha_1^{w_2})$ then 
$d(\gamma_1, \alpha)\leq 1$, as $\mathcal{U}(\alpha_0^{w_0},\alpha_1^{w_2})=\{\alpha_0,\alpha_1\}$; hence $d(\gamma, \{\alpha,\beta\})\leq 2$ and the result follows. If, on the contrary, $\gamma_1\in \mathcal{U}(\alpha_1^{w_2},\beta_0^{w_1})$, we continue this process inductively, and obtain a sequence of arcs $\gamma_i$ such that $d(\gamma,\gamma_i) \le i$ and \[\gamma_i \in \mathcal{U}(\beta_i^{w_{i+1}},\alpha_{i-1}^{w_i}) \cup  \mathcal{U}(\alpha_i^{w_{i+1}},\beta_{i-1}^{w_i}).\] We want to find a suitable arc $\gamma_{i+1}$; there are two possible cases. Suppose first that $\gamma_i\in \mathcal{U}(\beta_i^{w_{i+1}},\alpha_{i-1}^{w_i})$. Then, there exists \[\gamma_{i+1}\in \mathcal{U}(\beta_i^{w_{i+1}}, \alpha_{i+1}^{w_{i+2}}) \cup \mathcal{U}(\alpha_{i+1}^{w_{i+2}}, \alpha_{i-1}^{w_i}) \] with $d(\gamma_i,\gamma_{i+1})\le 1$. Once again, if $\gamma_{i+1}\in\mathcal{U}(\alpha_{i+1}^{w_{i+2}}, \alpha_{i-1}^{w_i})$, then  \[d(\gamma, \{\alpha,\beta\}) \leq d(\gamma, \gamma_i)+ d(\gamma_i,\gamma_{i+1}) + d(\gamma_{i+1}, \alpha) \leq i + 2,\] as  $d(\alpha_{i+1}, \alpha)=d(\alpha_{i-1}, \alpha)= 1$.  Otherwise,  $\gamma_{i+1} \in \mathcal{U}(\beta_i^{w_{i+1}}, \alpha_{i+1}^{w_{i+2}})$ and the induction process continues.

The case where $\gamma_i \in \mathcal{U}(\alpha_i^{w_{i+1}},\beta_{i-1}^{w_i})$ is analogous, and follows using the same argument as in the previous case, using $\beta$ instead of $\alpha$. 

After $k-1$ steps, we have that either $d(\gamma,\{\alpha, \beta\})\leq k$,  or else there is $\gamma_{k-1}$ with $d(\gamma,\gamma_{k-1})\leq k-1$ and such that 
 \[\gamma_{k-1}\in \mathcal{U}(\beta_{k-1}^{w_{k}},\alpha_{k-2}^{w_{k-1}})\cup  \mathcal{U}(\alpha_{k-1}^{w_{k}},\beta_{k-2}^{w_{k-1}}).\]
If $\gamma_{k-1}$ is in the first term of the union, there is
\[
\gamma' \in  \mathcal{U}(\beta_{k-1}^{w_{k}},\alpha_{k-1}^{w_{k-1}}) \cup \mathcal{U}(\alpha_{k-1}^{w_{k-1}},\alpha_{k-2}^{w_{k-1}})
\] with $i(\gamma',\gamma_{k-1}) = 0$. 
If $\gamma' \in \mathcal{U}(\alpha_{k-1}^{w_{k-1}},\alpha_{k-2}^{w_{k-1}})$, then $d(\gamma,\alpha)\leq k+1$. Otherwise,  $\gamma'\in  \mathcal{U}(\beta_{k-1}^{w_{k}},\alpha_{k-1}^{w_{k-1}})$ with $d(\gamma,\gamma')\leq k$. The second case is analogous, and produces either  $d(\gamma,\beta)\leq k+1$ or $\gamma'\in  \mathcal{U}(\alpha_{k-1}^{w_{k}},\beta_{k-1}^{w_{k-1}})$, proving the Claim and thus the Lemma also. 
\end{proof}

We are finally in a position for proving our result about the hyperbolicity of matching arc complexes: 

\begin{proof}[Proof of Theorem \ref{main:hyp}]
The result follows from the combination of Lemmas \ref{lem:guessing}, \ref{lem:thin} and \ref{lem:diam}
\end{proof}


\begin{thebibliography}{10}

\bibitem{A13} \textsc{T. Aougab}, Uniform hyperbolicity of the graphs of curves. {\em Geom. Topol.} 17 (2013), no. 5, 2855--2875.

\bibitem{AFP17} \textsc{J. Aramayona, A. Fossas, H. Parlier}, Arc and curve graphs for infinite-type surfaces. {\em Proc. Amer. Math. Soc.} 145 (2017), no. 11, 4995--5006.

\bibitem{AV18} \textsc{J. Aramayona, F. Valdez}, On the geometry of graphs associated to infinite-type surfaces. {\em Math. Z.} 289 (2018), no. 1-2, 309--322.

\bibitem{B14}
\textsc{B.H. Bowditch}.
Uniform Hyperbolicity of the Graphs of Curves.
\textit{Pac. J. Math.} 269 no. 2, (2014), pp. 269--280.

\bibitem{Bux} \textsc{K.-U. Bux, M. Fluch, M. Marschler, S. Witzel}. The braided Thompson's groups are of type $F_\infty$. With an appendix by Zaremsky. {\em J. Reine Angew. Math.} 718 (2016), 59–101.

\bibitem{CRS14} \textsc{M. Clay, K. Rafi, S. Schleimer}, Uniform hyperbolicity of the curve graph via surgery sequences. {\em Algebr. Geom. Topol.} 14 (2014), no. 6, 3325--3344. 





\bibitem{DFV18}
\textsc{M.G. Durham, F. Fanoni,  N. Vlamis}.
Graphs of curves on infinite-type surfaces with mapping class group actions.
\textit{Ann. Inst. Fourier}, 68, 6, (2018), pp. 2581--2612.

\bibitem{GLU20} \textsc{A. Genevois, A. Lonjou, C. Urech}. Asymptotically rigid mapping class groups I: Finiteness properties of braided Thompson's and Houghton's groups. {\em Geometry and Topology}, in press. 

\bibitem{FM} \textsc{B. Farb, D. Margalit}.  A primer on mapping class groups, Princeton Mathematical Series, 49. {\em Princeton University Press}, Princeton, NJ, 2012. xiv+472 pp.

\bibitem{HH17} \textsc{A. Hilion, C. Horbez.} The hyperbolicity of the sphere complex via surgery paths. {\em J. Reine Angew. Math.} 730 (2017), 135-161.

\bibitem{HPW15}
\textsc{S. Hensel, P. Przytycki \& R.C.H. Webb}.
1-slim triangles and uniform hyperbolicity for arc graphs and curve graphs.
\textit{J. Eur. Math. Soc.}, 17, (2015), pp. 755--762.




\bibitem{MM99} \textsc{H. A. Masur, Y. N. Minsky}. Geometry of the complex of curves. I. Hyperbolicity. {\em Invent. Math.} 138 (1999), no. 1, 103–149.

\bibitem{MS13}
\textsc{H. Masur \& S. Schleimer}.
The geometry of the disk complex.
\textit{J. Amer. Math. Soc.}, 26, no. 1, (2013), pp. 1--62.

\bibitem{PS13} \textsc{P. Przytycki, A. Sisto}. An even shorter proof that curve graphs are hyperbolic. {\em Blogpost}, available from {\url https://alexsisto.wordpress.com/2013/09/20/an-even-shorter-proof-that-curve-graphs-are-hyperbolic/}






\bibitem{Z} \textsc{M. Zaremsky}. Geometric structures related to the braided Thompson groups. Preprint (version 1), \url{ https://arxiv.org/abs/1803.02717v1}

\bibitem{Z2}\textsc{M. Zaremsky}, {\em personal communication.}











\end{thebibliography}
\end{document}